\documentclass{article}

\usepackage{arxiv}

\usepackage[utf8]{inputenc} 
\usepackage[T1]{fontenc}    
\usepackage{hyperref}       
\usepackage{url}            
\usepackage{booktabs}       
\usepackage{amsfonts}       
\usepackage{nicefrac}       
\usepackage{microtype}      
\usepackage{lipsum}
\usepackage{graphicx}
\usepackage{amsmath,amssymb}
\usepackage{bbm, dsfont}
\usepackage{ulem}
\usepackage{multirow}
\usepackage{tocbibind}

\usepackage[toc,page]{appendix}

\newcommand{\Z}{\mathbb{Z}}
\newcommand{\R}{\mathbb{R}}
\newcommand{\E}{\mathbb{E}}

\graphicspath{ {./images/} }
\newtheorem{theorem}{Theorem}
\newtheorem{assumption}{Assumption}
\newtheorem{proposition}{Proposition}
\newtheorem{lemma}{Lemma}
\newtheorem{corollary}{Corollary}
\title{Data-driven Ranking and Selection under Input Uncertainty}

\author{
 Di Wu \\
  Amazon Web Services\\ 
  Seattle, WA 98109\\
  \texttt{woody10074026@gmail.com}
   \And
 Yuhao Wang \\
  School of Industrial and Systems Engineering\\
  Georgia Institute of Technology\\
  Atlanta, GA 30332\\
  \texttt{yuhaowang@isye.gatech.edu}
  \And
 Enlu Zhou \\
  School of Industrial and Systems Engineering\\
  Georgia Institute of Technology\\
  Atlanta, GA 30332\\
  \texttt{enlu.zhou@isye.gatech.edu}
}

\begin{document}
\maketitle

\begin{abstract}
We consider a simulation-based Ranking and Selection (R\&S) problem with input uncertainty, where unknown input distributions can be estimated using input data arriving in batches of varying sizes over time. Each time a batch arrives, additional simulations can be run using updated input distribution estimates. The goal is to confidently identify the best design after collecting as few batches as possible. We first introduce a moving average estimator for aggregating simulation outputs generated under heterogenous input distributions. Then, based on a Sequential Elimination framework, we devise two major R\&S procedures by establishing exact and  asymptotic confidence bands for the estimator. 
We also extend our procedures to the indifference zone setting, which helps save simulation effort for practical usage. Numerical results show  the effectiveness  and necessity  of our procedures in  controlling error from input uncertainty. Moreover, the efficiency can be further boosted through optimizing the ``drop rate'' parameter,  which is the proportion of past simulation outputs to discard,  of the moving average estimator.

\textbf{Keywords:} ranking and selection, optimization via simulation, input uncertainty, fixed-confidence, streaming data, online estimation, indifference zone

\end{abstract}

%



\maketitle

%


\section{Introduction} \label{sec:intro}
Stochastic simulation has been used widely for evaluating and optimizing complex systems arising in manufacturing, transportation, and many other domains. Building a stochastic simulation model requires to account for external random factors (e.g., lead time, traveling time, demand load) that affect the system's performance. A common practice is to model such randomness by probability distributions, from which random samples are generated to simulate real-world scenarios. These distributions are referred to as ``input distributions''.

In practice, input distributions need to be estimated from finite historical data known as ``input data'', where the estimation error results in \emph{input uncertainty} (IU). Aside from this extrinsic uncertainty, the simulation output is also subject to \emph{stochastic uncertainty} (SU), which is the intrinsic uncertainty induced by random samples generated from input distributions. However, these two types of uncertainty differ in nature: while SU can be reduced by investing simulation effort, IU is determined by the availability of input data which is largely beyond a modeler's control. Ignoring IU can be risky when simulation results are used to inform decisions (see, e.g., \cite{zhou2015simulation}).  

In many application problems, input data are collected frequently to reduce the IU of the simulation model. The simulation model, which is often time  consuming to run, is then used to compare different designs/strategies to find out the best one with high confidence. Such a procedure is often referred to as Ranking and Selection (R\&S). Data often come on a fast time scale (such as daily), but system designs cannot be changed so frequently; otherwise, it not only incurs excessive cost and labor but also might cause instability to the system. Hence, simulation becomes extremely useful to test and compare potential designs/strategies before they are  implemented in the real system. The following application examples illustrate such a problem setting.

\begin{enumerate}
\item \textbf{Start ride sharing service in an airport}. In some cities, ride sharing used to be regulated (disallowed) in the airport. When such regulation is lifted, the platform will face a cold-start problem for pricing and matching strategies due to the lack of historical data at the airport (such as driver and rider arrival rates, pricing and earnings elasticities) which are the inputs to a typical simulation system that platform uses to choose the pricing and matching strategies. Such simulation can take hours per replication to capture the day/week effect in the market condition. As new data come in daily, the simulation input will be updated and new simulations are run to compare potential pricing and matching strategies until one best strategy is identified and then rolled out to the platform.

\item {\bf Supply chain optimization}. A multinational retailer makes use of a complex supply chain simulation model to evaluate and compare a few potential inventory policies (including the current policy in use). The model’s input distributions capture the uncertainty in production lead time, transit lead time, demand, and so on. As data accrues daily over the selling season, the retailer continuously updates the input models and runs simulations until it identifies the best inventory policy with high confidence and then implements the policy on the real system. 

\item \textbf{Budget R\&D capital}: A company conducts  testing  on a set of products, with the goal to find the product with the highest expected net gain within the shortest possible time period. Testing each product is time costly and subject to stochastic error, so multiple replications of testing are needed. As outside investments reveal over time, the company uses this data to update its estimate of the market condition of each product with reduced uncertainty and runs new tests until it narrows down to one best product and then releases it to the market. 
\end{enumerate}

These examples motivate us to consider a fixed confidence ranking and selection (R\&S) problem where new input data arrives over time in batches of possibly varying and random sizes. In each time stage, the number of simulation replications that can be run is limited, due to the expensive simulation cost  and the time length of each stage. On the one hand, we can use the new data batch to update the input distribution of the simulation model so as to reduce IU. Because of the limited simulation replications at each time stage, it is necessary to aggregate the simulation outputs across different time stages to reduce SU in the performance estimate. On the other hand, simulations are run under a different (updated) input distribution at each stage, and hence the simulation outputs are correlated and differently distributed  across different time stages. This creates a major challenge in designing a R\&S procedure for this setting, since most classical R\&S requires independent and identically distributed (i.i.d.) condition on the simulation outputs. 
Further complications lie in how to (i) update the input models; and (ii) aggregate simulation outputs generated under heterogenous input models. While there are well-established methods for these two tasks, whether they would fit into the R\&S framework needs investigation. 

To address these challenges in the problem setting above, we propose a moving average estimator for system performance to aggregate the simulation outputs across time stages. Intuitively, the moving average estimator drops the obsolete simulation outputs while keeping the more recent outputs that are generated under closer input distributions to the latest one. A parameter, called drop rate, is used to balance the trade-off between bias (due to keeping the old simulation outputs) and variance (due to the limited number of simulation outputs) in the performance estimate. We then build on the sequential elimination (SE) framework, which was first developed by \cite{even2002pac,even2006action}, to perform R\&S  using the moving average estimator. Specifically, by computing the confidence bands to account for both IU and SU of the moving average estimates, the SE procedures eliminate one (statistically) inferior design each time. We summarize the contributions of this paper as follows.

\begin{enumerate}
\item
This paper along with our earlier conference paper, \cite{wu2019fixed}, are the first to consider streaming input data in R\&S problems and design a data-driven approach. The simulation outputs generated from different stages are neither independent nor identically distributed, which is a major theoretical challenge that makes our approach drastically different from the classical R\&S procedures.
\item
We introduce a moving average estimator to aggregate simulation outputs generated under heterogenous input models, and substantially extend a Sequential Elimination framework to handle IU with streaming input data. 
\item
We design Sequential Elimination procedures, called SEIU and SEIU-MCB, based on confidence bands established using two different approaches. The SEIU approach relies on exact confidence bands but tends to be conservative; the SEIU-MCB approach relies on  asymptotically valid  but tighter confidence bands. Specifically, the latter leverages  results from ``Multiple Comparison with the Best(MCB)" \cite{chang1992optimal} and an asymptotic normality result that we establish for characterizing the trade-off between IU and SU under streaming data, which is of independent interest and can be viewed as a multi-stage generalization of a well-known result in \cite{cheng1997sensitivity}. We also extend the two SE procedures to the indifference zone setting, named as SEIU(IZ) and SEIU-MCB(IZ), which further boosts the procedures for practical usage. 
\item
The necessity and effectiveness of the procedures are demonstrated numerically through a simple quadratic problem and a more sophisticated production-inventory problem. Furthermore, we show that SEIU-MCB can be accelerated by optimizing the drop rate parameter that shows up in the moving average estimator.
\end{enumerate}

We should remark that, the aggregation of simulation outputs under different input distributions via the likehihood ratio method for has been studied by \cite{feng2015green,feng2017green,eckman2018green,feng2019efficient} under the name ``green simulation'', and recently 
by \cite{9384032} for simulation optimization. Such methods have the potential to be extended to the setting in this paper. We leave the adaptation of these methods to data-driven R\&S as an open problem.

\subsection{Literature Review}
The mixed effect of IU and SU on simulation output has been studied in a variety of contexts. In terms of quantifying the impact of IU on a single system design's simulation output, the earliest method at least dates back to \cite{barton1993uniform}, 
followed by many other works, including but not limited to  \cite{cheng1997sensitivity}, \cite{chick2001input,zouaoui2003accounting,zouaoui2004accounting, xie2014bayesian}, \cite{barton2014quantifying}, \cite{xie2014bayesian,xie2016multivariate}, \cite{lam2021subsampling}, \cite{lin2015single}, \cite{lam2017empirical}, \cite{feng2019efficient}, \cite{song2015quickly},\cite{zhu2020risk}. 
These works assume a fixed batch of input data, but recently \cite{Zhou2018} and \cite{liu2019online}  considered streaming input data, the same setting as this paper. However, they take the likelihood ratio method to estimate the performance and a sampling-based method to quantify IU, whereas we develop a moving average estimator and analytically compute confidence bands to quantify IU. We refer the reader to \cite{corlu2020stochastic} for a recent review on the topic of input uncertainty.

There is an abundant and fast growing R\&S literature. Although a comprehensive review is out of this paper's scope, in what follows we make our best effort to present our work through a broader perspective. 
In simulation optimization or ordinal optimization (\cite{ho1992ordinal}), R\&S arises in the context of identifying the best system design (or a subset of good designs) through noisy simulation outputs. Simulation-based R\&S is predominantly studied under the assumption that SU is the only source of uncertainty. Research in this field can roughly be categorized into two problem settings: fixed budget and fixed confidence. In the fixed budget setting, the total number of simulation runs is constrained, and the goal is to maximize the probability of correct selection (PCS) of the best design. In this paper, we focus on the fixed confidence setting, where the goal is to attain a pre-specified PCS using as little simulation effort as possible.

The study on fixed confidence R\&S is primarily focused on the Indifference Zone (IZ) formulation, where the goal is to select the best system design with a target probability when the top two designs differ by at least some value $\delta$ in expected performance. 
Since \cite{bechhofer1954single}, the IZ formulation has been been studied actively by the statistics community. In simulation literature, the milestone work that adapts IZ to simulation-based R\&S is \cite{kim2001fully}, in which the proposed KN procedure not only outperforms classical statistical procedures, but also allows the use of Common Random Numbers, a variance reduction technique, for further enhancement. The KN procedure has been extended to steady-state simulation in \cite{goldsman2002ranking} and \cite{kim2006asymptotic}, and also inspired numerous subsequent works on fully sequential procedures including \cite{batur2006fully,hong2005tradeoff,hong2007selecting,pichitlamken2006sequential,jeff2006fully} and so on. More recently, \cite{frazier2014fully} proposes an IZ procedure which is Bayesian in spirit but provides guarantee on the frequentist PCS; \cite{luo2015fully} designs procedures for large-scale R\&S problems in parallel computing environments; \cite{fan2016indifference} develops procedures which do not require an IZ parameter. 

The fixed confidence R\&S problem has also been studied extensively in the Multi-Armed Bandits (MAB) literature under the name of Best Arm Identification (BAI). 
Except for minor difference in performance criterion,
R\&S and BAI are essentially the same mathematical problem. Nevertheless, the research in the two fields diverges in several aspects, with the most notable difference that BAI focuses more on characterizing complexity results and designing algorithms that match some worst-case or problem-specific lower bounds, such as  
the seminal works of \cite{even2002pac,mannor2004sample}, followed by \cite{gabillon2012best,karnin2013almost,jamieson2014best,kaufmann2016complexity}, and culminating in \cite{garivier2016optimal}. 
Aside from these frequentist results, \cite{russo2016simple} proposes some simple algorithms from a Bayesian viewpoint.

Compared with the studies on classical R\&S and BAI which have gradually matured over the past few decades, research on R\&S under IU is only starting to gain momentum. One stream of work takes a distributionally robust optimization approach by assuming that the true input distribution is contained in a finite set of known distributions (i.e., ambiguity set), with a goal of selecting the design with the best worst-case performance over the ambiguity set, such as \cite{gao2017robust,xiao2018simulation,xiao2020optimal,fan2020distributionally, wu2017OCBAIU}.  
Another line of research aims to screening out as many inferior designs as possible in the presence of IU, such as \cite{corlu2013subset,corlu2015subset, song2019input}. 
Our work has a close connection with \cite{song2019input} in that we both assume parametric input distributions and incorporates the (MCB) framework from \cite{chang1992optimal}. However, despite these similarities,  our setting of streaming input data is fundamentally different. In \cite{song2019input} the performance estimate only consists of i.i.d. samples simulated under the same input distribution, whereas in this paper we sequentially update input distributions over time and aggregate past simulation outputs from heterogeneous input distributions. Therefore, the methodologies therein cannot be easily extended to our problem. 

All the papers on R\&S under IU listed above share one assumption in common:  input data is a static dataset which will not grow over time. Streaming data have only been considered recently  for R\&S by \cite{wu2019fixed}, which is a preliminary conference version of this paper. \cite{song2019stochastic}, \cite{liu2021Bayesian}, and \cite{liu2022bayesian} studied continuous simulation optimization with streaming data.

The rest of the paper is organized as follows. We formulate the problem in Section \ref{sec:formulation}, and extend the Sequential Elimination framework in Section \ref{sec:SE}. We derive the exact and asymptotically valid confidence bands in the SE framework and present the corresponding procedures SEIU and SEIU-MCB in Sections \ref{sec:exact} and \ref{sec:SEIU-MCB}, respectively. We extend the procedures to the IZ setting in Section \ref{sec:indifference zone}. Finally, we numerically demonstrate the procedures' performance in Section \ref{sec:numerical}, and conclude in Section \ref{sec:conclusion}.

\section{Problem Formulation} \label{sec:formulation}
\subsection{Review on fixed confidence R\&S without IU}
We begin with a brief review on the classical fixed confidence R\&S problem and lay down some basic notations. Suppose that we are given a set of designs $\mathcal{I} =\{1, \ldots, K\}$, and the goal is to find the design with the highest expected performance. A design $i \in \mathcal{I}$ will be evaluated through repeated simulation runs, where during each run we first generate a random sample $\xi$ from a distribution $P_i$, and then compute the output by evaluating a deterministic function of $i$ and $\xi$.

The input distributions $\{P_i\}$  are used as input to the simulation model to capture various sources of real-world randomness. In classical R\&S literature, $\{P_i\}$ are assumed to be known, and the simulation outputs for any design $i$, denoted by $\{X_{i,r}\}_r$, are independent and identically   (i.i.d.). Let $\mu_i := \E_{P_i}[X_{i,1}]$ be the true mean performance of design $i$. Then, $\mu_i$ can be estimated by averaging the simulation outputs $\{X_{i,r}\}_r$. Due to finite-sample error, the probability of selecting the best design (Probability of Correct Selection, or PCS) is often adopted to quantify the confidence of selection. In the fixed confidence setting, there is no constraint on the simulation budget, but the PCS is required to exceed a pre-specified level. We refer to procedures that guarantee to attain the PCS target as \emph{statistically valid}. The core of the fixed confidence R\&S problem is to find statistically valid procedures which terminate after as few simulation runs as possible.

\subsection{Fixed confidence R\&S with streaming input data}
A limitation of the classical R\&S framework is that true input distributions are rarely known in practice; instead, they must be estimated using finite data, which incurs input uncertainty (IU) that propagates to simulation output.
Throughout the paper, we assume that all the designs share the same set of input distributions. Furthermore, to facilitate the characterization of IU, we impose the following structure on the input distributions. 

\begin{assumption} \label{assump:parametric}
The set of input distributions belongs to a known parametric family, which contains $S$ mutually independent distributions with unknown parameters $ \{\theta_1^c, \theta_2^c,\cdots,\theta_S^c\}$, where $\theta_s^c \in \mathbb{R}^{d_s}$ .
\end{assumption}
The parametric assumption is common in the literature of R\&S. Let $d:=\sum_{s=1}^S d_s$ be the total dimension of parameters, and $\theta := [\theta_1^\intercal, \ldots, \theta_S^\intercal]^\intercal$ be a vector in $\R^d$ concatenating all parameters. The input distributions then form a product measure $P_\theta = \prod_{s=1}^S P_{\theta_s}$. Also let $\theta^c \in \R^d$ denote the true input parameter. Under Assumption \ref{assump:parametric}, the expected performance of design $i$ becomes a function of $\theta$ and is therefore denoted $\mu_i(\theta)$. 

The negative impact of IU on R\&S can be seen as follows. For simplicity, assume throughout the paper that $b \in \mathcal{I}$ is the unique true best design, i.e., $\mu_b(\theta^c) > \max_{i\neq b}\mu_i(\theta^c)$. When $\theta^c$ is unknown and is replaced by a finite-sample estimate $\hat{\theta}$, the best design under $\hat{\theta}$ may be different from $b$, in which case one cannot correctly identify the true best design even using infinite simulation runs. If input data is given as a static dataset which does not grow over time, then our best hope is to gauge the impact of IU and provide a more conservative statistical guarantee (e.g., a lower PCS) due to limited resolution in performance estimation.

Assuming a static input dataset is reasonable in applications where gathering data is expensive, labor-intensive, or time-consuming. Nevertheless, with the advancement in big data technology, additional data can sometimes be collected efficiently and economically. This motivates us to consider a R\&S problem with streaming input data, which we elaborate on as follows. 

Suppose that for each input distribution $P_{\theta^c_s}$, the input data consists of i.i.d. samples denoted by $\{\zeta_{s,1}, \zeta_{s,2}, \ldots\}$. The input data arrives in batches sequentially in time, where the sample size of batch $t$ is $n_{s}(t) \ge 1$. Denote by $N_s(t) :=\sum_{\ell=1}^t n_s(\ell)$ the total number of data samples up to batch $t$, and let $N_s(0) = 0$. We allow $n_s(t)$ to vary in $s$ and $t$ in order to have the most flexible model for multiple streams of input data with time-varying batch sizes.

The arrival process of input data thus divides the simulation process into multiple time stages. At the beginning of stage $t$, a new batch of input data arrives, which, together with historical data, is used to compute an updated parameter estimate $\hat{\theta}_t$. After that, incremental simulations are run by drawing i.i.d. samples from the updated input distributions $P_{\hat{\theta}_t}$. Similarly, let $m(t) \ge 1$ be the number of incremental simulation runs for each design at stage $t$, and let $M(t) := \sum_{\ell=1}^t m(\ell)$ be the total number of runs up until stage $t$. The simulation batch size $m(t)$ can vary across different stages, since the inter-arrival times between two adjacent data batches could be different. However, here we require the same batch size of simulation runs for each design because we will make pairwise comparison, which uses the Common Random Number (CRN)  strategy.

\begin{figure}[t] 
\centering
\includegraphics[width=0.95\linewidth]{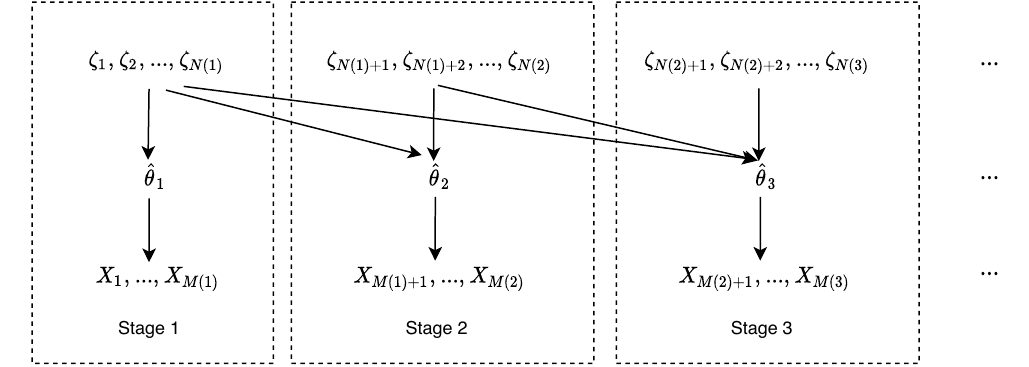}
\caption{Illustration of simulating a single design with streaming input data, where $\zeta$ denotes input data, $\hat{\theta}$ is the input parameter estimate, and $X$ denotes simulation output.}%
\label{fig:illustrate}%
\end{figure}

Figure \ref{fig:illustrate} illustrates how simulation outputs are generated for a single design, where we highlight the following observations. First, the parameter estimates $\{\hat{\theta}_t\}$ are correlated since they are computed using the same stream of input data. Second, since $\{\hat{\theta}_t\}$ are random variables that generally take different values, the simulation outputs $X$ are not identically distributed across different stages. Third, conditioned on $\{\hat{\theta}_t\}$, the outputs $X$ are i.i.d. within the same stage and independent across stages, but unconditional independence no longer holds since $X$ are jointly affected by the correlation among $\{\hat{\theta}_t\}$. In sum, the simulation outputs are neither independent nor identically distributed, which sets the problem apart from most of the R\&S studies.

The online estimation of an individual design's performance under streaming input data is an important problem in its own right.  In this paper, however, we are concerned with designing R\&S procedures that can achieve a pre-specified PCS upon termination. To make the problem well-defined, we first need to address the following questions.

\begin{enumerate}
\item[(i)]
What is the estimator of the input parameter $\theta^c$?
\item[(ii)]
How to aggregate non-i.i.d. simulation outputs to estimate $\mu_i(\theta^c)$?
\end{enumerate}

Many existing methods can be applied to these two problems, but most of them do not suit our purpose. The key is to find an estimator which would allow a tractable decomposition of the correlation among different stages' simulation outputs. This motivates us to make the following assumption.

\begin{assumption} \label{assump:estimator}
For each input parameter $\theta^c_s$, there exists a function $D_s$ such that 
\begin{equation} \label{eq:theta-est}
\hat{\theta}_{s,t} := \frac{1}{N_s(t)} \sum_{j=1}^{N_s(t)} D_s(\zeta_{s,j})
\end{equation}
is an unbiased estimator of $\theta^c_s$.
\end{assumption}

Assumption \ref{assump:estimator} can often be satisfied through reparametrization. Specifically, for a parametric family of
distributions with $k$ unknown parameters, the parameters (or some transformation of the parameters, which can be transformed back to the parameters) can often be estimated by the first $k$ moments with $D_s(x)$ being the
functions $x,x^2,\cdots,x^k$. For example, if we reparameterize the normal distribution by the first two moments, then $D_s(\zeta) = [\zeta, \zeta^2]^\intercal$ satisfies the desired property. Let $D_{s,j}$ be a shorthand for $D_s(\zeta_{s,j})$. Then, $\{D_{s,j}\}$ are i.i.d. transformed input data samples with $\E[D_{s,1}] = \theta^c$. As we shall see in Section \ref{sec:SEIU-MCB}, the additive form of $\hat{\theta}_{s,t}$ plays an important role in designing R\&S procedures.

The other question is how to estimate $\mu_i(\theta^c)$ using samples generated under different input distributions. To simplify indexing, let $\{X_{i,r}(\hat{\theta}_t)\}_{r=1}^{m(t)}$ denote the batch of simulation outputs generated at stage $t$ under the input distribution $P_{\hat{\theta}_t}$. We consider the following moving average estimator,
\begin{equation} \label{eq:mae}
\hat{\mu}_{i,t} := [M(t)- M(t_\eta)]^{-1} \sum_{\ell=t_\eta+1}^t \sum_{r=1}^{m(\ell)} X_{i,r}(\hat{\theta}_\ell),
\end{equation}
where $\eta \in [0,1)$ is called the \emph{drop rate} which controls the amount of effective samples used for computation, and $t_\eta := \lfloor \eta t \rfloor$ is the number of stages ``discarded''. In words, the estimator in (\ref{eq:mae}) only averages the latest $(1-\eta)$ portion of all the simulation output, hence the name ``moving average''. The idea of throwing away some earlier samples is motivated by the following two extreme cases.
\begin{enumerate}
\item[(i)]
$\eta=0$: Keeping all the outputs helps reduce SU but also retains all the biases $\{\mu_i(\hat{\theta}_t) - \mu_i(\theta^c)\}$ that accumulate over time.
\item[(ii)]
$\eta=1$: This roughly corresponds to keeping only the latest output $X_{i,m(t)}(\hat{\theta}_t)$, which reduces the bias but also prohibits the estimator from converging to $\mu_i(\theta^c)$.
\end{enumerate}

In Section \ref{sec:SEIU-MCB}, we explicitly characterize this tradeoff between IU and SU through $\eta$, and show that optimzing $\eta$ can lead to a lower asymptotic variance than in both of the two extreme cases. 

We now formally state the problem as follows. We aim to find the best design among $K$ alternatives via simulation. The designs share the same input distribution with an unknown parameter $\theta^c$, which can be estimated via (\ref{eq:theta-est}) using input data arriving in batches. The arrival process divides time into multiple stages, where during each stage we can update the input parameter estimate $\hat{\theta}$ and run incremental simulations under $P_{\hat{\theta}}$. The performance of each design is estimated via a moving average estimator in (\ref{eq:mae}). Given $\alpha \in (0,1)$, the goal is to design a procedure which terminates after a number of stages and outputs the true best design with probability $\ge 1-\alpha$.

On a side note, classical R\&S procedures may still be applied to our problem. For instance, we can collect a few batches of input data, and apply KN as if the input distribution estimates are accurate. However, statistical validity is no longer guaranteed under the influence of IU. While in theory this issue can be overcome with sufficient data, classical procedures have no way of telling when to safely stop collecting data.

\section{Sequential Elimination Framework} \label{sec:SE}
In fixed confidence R\&S, an important idea underlying many sequential procedures is to construct a ``continuation region'', where a design can be confidently labeled as suboptimal if its performance estimate exits the region. One example is the triangular region in the KN procedure (see \cite{kim2001fully}). In our problem setting, most of the existing methods are inapplicable since they rely on key assumptions such as i.i.d. and normality. To construct a continuation region in the presence of IU, we resort to a Sequential Elimination (SE) framework in \cite{even2002pac,even2006action}.

In short, an SE procedure sequentially eliminates inferior designs until there is only one left, and the surviving design is output as the estimated best design. Let $\delta_{ij}(\theta) := \mu_i(\theta) - \mu_j(\theta)$ and $\hat{\delta}_{ij,t} = \hat{\mu}_{i,t} - \hat{\mu}_{j,t}$. A requirement for an SE procedure is a collection of the confidence bands for $\delta_{ij}(\theta^c)$,
\begin{equation} \label{eq:pairwise-bands}
\mathbb{P}\left\{\bigcap_{t=1}^\infty \bigcap_{\substack{i<j} }  \left\{|\hat{\delta}_{ij,t} - \delta_{ij}(\theta^c)| \le w_{ij, t} \right\} \right\} \ge 1-\alpha, 
\end{equation}
where $w_{ij,t} \rightarrow 0$ as $t \rightarrow \infty$.
The elimination happens to design $i$ if there exists a design $j$ such that
\begin{equation} \label{eq:elim-rule}
\hat{\delta}_{ij,t}  + w_{ij,t}=  \hat{\mu}_{i,t} - \hat{\mu}_{j,t} + w_{ij,t}< 0.
\end{equation}
i.e., if the upper confidence of $\delta_{ij}$ is below $0$. To see why (\ref{eq:elim-rule}) achieves statistical validity, notice that if the confidence bands achieve perfect coverage, i.e., $\delta_{ij}(\theta^c) \in [\hat{\delta}_{ij,t} - w_{ij,t}, \hat{\delta}_{ij,t} + w_{ij,t}]$ for all $i,j$ and $t$, then for any design $i \neq b$ (recall that $b$ is the unique best design),
\begin{equation*}
\hat{\mu}_{b,t}  - \hat{\mu}_{i,t} + w_{bi,t} \ge \mu_b(\theta^c) - \mu_i(\theta^c)  > 0,
\end{equation*}
meaning that design $b$ will never be eliminated. Meanwhile, since $w_{ij,t} \rightarrow 0$ as $t\rightarrow \infty$, the procedure terminates in finite time almost surely, and the final output must be design $b$.

In applying the SE procedure, the core question is how to construct the confidence bands $\{w_{ij,t}\}$. Notice that the false selection happens at any stage when the true optimal design is eliminated. Therefore, we can write the probability of false selection (PFS) as:
\begin{equation} \label{eq:SE}
\begin{aligned}
   \text{PFS} =&\mathbb{P}(\text{The optimal design } b \text{ is eliminated at some } t)\\
    =&\mathbb{P}\left( \bigcup _{t=1}^{\infty} \left\{ \text{The optimal design } b \text{ is eliminated at } t\right\}\right)\\
    = & \sum_{t=1}^{\infty}\mathbb{P} \left( \text{The optimal design } b \text{ is eliminated at } t\right)\\
    \le&\sum_{t=1}^{\infty}  \mathbb{P} \left(\bigcup_{i<j}  \{ \mid \hat{\delta}_{ij,t} - \delta_{ij}(\theta^c) \mid > w_{ij,t}\} \right),
\end{aligned}
\end{equation}
 where the second equality follows from the fact that the events \{The optimal design $b$  is eliminated at  $t$\}, $t=1,2,\ldots$ are  disjoint, and the last inequality follows from
$$ \{\text{The optimal design } b \text{ is eliminated at } t \} \subseteq \bigcup_{i<j}  \{ \mid \hat{\delta}_{ij,t} - \delta_{ij}(\theta^c) \mid > w_{ij,t}\}. $$
Hence, if we can find $w_{ij,t}$ such that
$$
    \mathbb{P} \left(\bigcup_{\substack{ i<j} } \{ \mid \hat{\delta}_{ij,t} - \delta_{ij}(\theta^c) \mid > w_{ij,t}\} \right) \le \dfrac{6\alpha}{\pi^2t^2},
$$
then we have
\begin{equation} \label{eq:SE2}
    \begin{aligned}
    &\sum_{t=1}^{\infty}  \mathbb{P} \left(\bigcup_{i<j}  \{ \mid \hat{\delta}_{ij,t} - \delta_{ij}(\theta^c) \mid > w_{ij,t}\} \right)\\
    \le&\sum_{t=1}^{\infty} \dfrac{6\alpha}{\pi^2t^2} = \alpha  
    \end{aligned}
\end{equation}
We obtain that PCS is at least $1-\alpha$.

Notice that we can also use a non-pairwise framework which derives the confidence bands $w_{i,t}$ for each $\mu_i$. Nonetheless, the pairwise framework allows for the use of CRN, which often sharpens the comparison between designs. Furthermore, the common input distribution may introduce additional positive correlation that further reduces variance. All things considered, it is promising to achieve $w_{ij,t} < w_{i,t} + w_{j,t}$ and therefore faster termination. Therefore, we only focus on the pairwise framework.

The problem is reduced to finding tight confidence bands for $\hat{\delta}_{ij,t}$. In the classical IU-free setting where $\hat{\delta}_{ij,t}$  are averages of i.i.d. samples, many concentration bounds (e.g., Hoeffding's inequality, Gaussian tail bounds) are readily available. Unfortunately, these bounds do not apply directly to the case with IU. In the upcoming section, we derive IU-compatible confidence bands, on top of which we build SE procedures.
\section{SEIU Procedures} \label{sec:exact}
We propose procedures based on confidence bands derived through an exact approach. These confidence bands allow us to build idealized SE procedures, which assume full knowledge on several key parameters. The idealized procedures are shown to be statistically valid and are equipped with upper bounds on their expected runtime.  We also briefly discuss how to estimate the unknown parameters in practice to end this section.

\subsection{Derivation of exact confidence bands} \label{sec:SEIU-derivation}
We recall a few notations and concepts before deriving the confidence bands. Denote by $\Theta \subseteq \R^d$ the space in which $\theta$ lives. The stream of transformed input data for the $s$th input distribution, denoted by $\{D_{s,t}\}_t$, are i.i.d. random vectors in $\R^{d_s}$ whose sample mean is an unbiased estimator of $\theta_s^c$. Also recall that a random variable $X$ with mean $\mu=\E[X]$ is called \emph{sub-Gaussian} if there exists $\sigma>0$ such that $\E[e^{s(X-\mu)}] \le \exp(\sigma^2 s^2/2)$ for all $s \in \R$, in which case we write $X \sim subG(\sigma^2)$ and $\sigma^2$ is called a \emph{variance proxy}. In the following assumption, $D_{s,t,j}$ denotes the $j$th component of the random vector $D_{s,t}$.

\begin{assumption} \label{assump:SE-IU}
\quad
\begin{enumerate}
\item[(i)]
$\Theta$ is a compact subset of $\R^d$.
\item[(ii)]
For any $s,t$ and $j$, $D_{s,t,j} \sim subG(\nu_s^2)$ for some $\nu_s>0$ and  $\{D_{s,t,j}\}$ are i.i.d..
\item[(iii)]
For any $\theta \in \Theta$ and $i \in \mathcal{I}$, $X_i(\theta)\sim subG(\bar{\sigma}_i^2)$ for some $\bar{\sigma}_i > 0$ independent of $\theta$.
\item[(iv)]
For any design $i$, the performance function $\mu_i$ is Lipschitz continuous w.r.t. input parameter $\theta$. More specifically, there exists $\bar{L}_i>0$ such that, 
\begin{equation*}
|\mu_i(\theta_1) - \mu_i(\theta_2)| \le \bar{L}_i \|\theta_1 - \theta_2 \|_1, \quad \forall \theta_1, \theta_2 \in \Theta,
\end{equation*}
where $\|\cdot\|_1$ denotes the $\ell^1$-norm.
\end{enumerate}
\end{assumption}

In Assumption \ref{assump:SE-IU}, the major condition is the compactness of $\Theta$, which can be satisfied in most real-world applications where input distributions are supported on a bounded set (e.g., number of visitors on a given day, customer waiting time, etc.). With compactness, $\bar{\sigma}_i^2$ can often be taken as the maximum variance proxy over $\Theta$, and the Lipschitz continuity of $\mu_i$ becomes a local property that usually holds in practice. Also, the boundedness of input data and simulation outputs implies their sub-Gaussianity. Furthermore, Assumption \ref{assump:SE-IU} implies that there exists ${\bar{\sigma}}_{ij} > 0$ such that $X_i(\theta) - X_j(\theta) \sim subG({\bar{\sigma}}^2_{ij})$ for all $\theta \in \Theta$, and there exists $\bar{L}_{ij} > 0$ such that $\delta_{ij}(\cdot) = \mu_i(\cdot) - \mu_j(\cdot)$ is $\bar{L}_{ij}$-Lipschitz continuous on $\Theta$. 

We now motivate exact confidence bands for the moving average estimator $\hat{\delta}_{i,t}$. The goal is to find bounds $B_t(\cdot)$ such that for any fixed $x >0$, we have
\begin{equation*}
\mathbb{P}\{ |\hat{\delta}_{ij,t} - \delta_{ij}(\theta^c)| > x\} \le B_t(x),
\end{equation*}
where $B_t(x) \rightarrow 0$ as $t \rightarrow \infty$. To gain intuition for an IU-compatible bound, consider a simplified setting where $\theta^c \in \R$ and there is no SU, i.e.,
\begin{equation} \label{eq:no-SU}
\hat{\delta}_{ij,t} = \frac{1}{t} \sum_{\ell = 1}^t \delta_{ij}(\hat{\theta}_\ell).
\end{equation}
in which case error only comes from the biases $\{\delta_{ij}(\hat{\theta}_\ell) - \delta{ij}(\theta^c)\}$. Then, for any $x>0$,
\begin{align}
\mathbb{P}\{ |\hat{\delta}_{ij,t} - \delta_{ij}(\theta^c)| > x\} 
&\le \mathbb{P}\left\{\frac{1}{t} \sum_{\ell=1}^t |\delta_{ij}(\hat{\theta}_\ell) - \delta_{ij}(\theta^c)| > x \right\} \le
 \mathbb{P}\left\{ \sum_{\ell=1}^t |\hat{\theta}_\ell - \theta^c| > tx /\bar{L}_{ij} \right\} \label{eq:bound-tmp}
\end{align}
by the Lipschitz continuity of $\delta_{ij}$. To further decompose the last term in (\ref{eq:bound-tmp}), notice that for any nonnegative sequence $\{\omega_\ell\}$ with $\sum_{\ell=1}^t \omega_\ell = 1$, we have
\begin{align*}
\mathbb{P}\left\{ \sum_{\ell=1}^t |\hat{\theta}_\ell - \theta^c| > tx /\bar{L}_{ij} \right\}
\le \mathbb{P}\left\{\bigcup_{\ell=1}^t \left\{|\hat{\theta}_\ell - \theta^c| > \omega_\ell tx /\bar{L}_{ij} \right\} \right\}
\le \sum_{\ell=1}^t \mathbb{P}\left\{|\hat{\theta}_\ell - \theta^c| > \omega_\ell tx /\bar{L}_{ij} \right\}.
\end{align*}
The question is whether we can choose $\{\omega_\ell\}$ such that the resulting bound converges to 0 as $t\rightarrow \infty$. Note that taking equal weights $\omega_\ell = 1/t$ is not helpful, since the resulting bound is bounded from below by a positive constant:
\begin{equation*} 
\sum_{\ell=1}^t \mathbb{P}\left\{|\hat{\theta}_\ell - \theta^c| > x /\bar{L}_{ij} \right\} \ge \mathbb{P}\left\{|\hat{\theta}_1 - \theta^c| > x /\bar{L}_{ij} \right\} =  \mathbb{P}\left\{|D_1 - \theta^c| > x /\bar{L}_{ij} \right\}.
\end{equation*}
Instead, an intuitive way to tighten this bound is to allow larger deviations (assign a greater $\omega_\ell$) for earlier stages, and smaller deviations for later stages. This idea is made concrete by the following proposition.

\begin{proposition} \label{prop:cb}
Let Assumption \ref{assump:SE-IU} hold. Then, for any design $i$ and any $x,y>0$,
\begin{align} \label{eq:single-cb}
\mathbb{P}\left\{|\hat{\delta}_{ij,t} -\delta_{ij}(\theta^c)| > x + y \right\} &\le 2 \exp\left\{- \frac{[M(t) - M(t_\eta)]x^2}{2  \bar{\sigma}_{ij}^2} \right\} \notag \\
&+ 2(t-t_\eta) \Sigma_{s=1}^S d_s \exp\left\{- \frac{[M(t) - M(t_\eta)]^2 y^2}{2d^2 \bar{L}_{ij}^2 \nu_s^2 \gamma^2_{s,\eta}(t)} \right\},
\end{align}
where $\gamma_{s,\eta}(t) := \sum_{\ell = t_\eta+1}^t m(\ell) / \sqrt{N_s(\ell)}$.
\end{proposition}

\textbf{remark}
The term $\gamma_{s,\eta}$ loosely characterizes the rate at which the IU-induced error accumulates over time: the $1/\sqrt{N_s(\ell)}$ term reflects how fast IU (i.e., estimation error in $\hat{\theta}_\ell$) diminishes as input data grows, but the error is multiplied by a factor of $m(\ell)$, since during each stage we perform $m(\ell)$ simulation runs under the inaccurate input distribution $P_{\hat{\theta}_\ell}$.

\textbf{remark}
In the proof of Proposition \ref{prop:cb}, the choice of $\omega_\ell \propto m(\ell) / \sqrt{N_s(\ell)}$ echoes our interpretation of $\gamma_{s,\eta}$, and it reweighs IU at different stages to yield a useful upper bound. It is also worth noting that despite the multiplicative factor $(t-t_\eta)$ in (\ref{eq:single-cb}), the bound still converges to 0 exponentially fast as $t\rightarrow \infty$.

Proposition \ref{prop:cb} provides an IU-compatible bounds for a pair of designs. When it comes to designing SE procedures, however, it is more advantageous to use the following simultaneous bounds for all pairs of designs, which exploits the fact that all designs share the same input distribution, and is therefore tighter than directly applying Bonferroni's inequality to (\ref{eq:single-cb}). 

\begin{lemma} \label{lemma:cb}
Let Assumption \ref{assump:SE-IU} hold. Then, for any $x_{ij},y_{ij}>0$,
\begin{align} \label{eq:multi-cb}
\mathbb{P} \left\{\bigcup_{ i<j } \left\{|\hat{\delta}_{ij,t} - \delta_{ij}(\theta^c)| > x_{ij} + y_{ij} \right\}  \right\} 
& \le \sum_{ i<j } \exp\left\{- \frac{[M(t) - M(t_\eta)]x_{ij}^2}{2  \bar{\sigma_{ij}}^2} \right\} \notag \\
& + 2(t-t_\eta)\Sigma_{s=1}^S d_s \max_{ i<j} \left\{\exp\left\{- \frac{[M(t) - M(t_\eta)]^2 y_{ij}^2}{2 \nu_s^2 d^2 \bar{L}^2_{ij} \gamma^2_{s,\eta}(t)} \right\}\right\},
\end{align}
where $\gamma_{s,\eta}(t) := \sum_{\ell = t_\eta+1}^t m(\ell) / \sqrt{N_s(\ell)}$.
\end{lemma}

\subsection{The SEIU procedures} \label{sec:SEIU}
Based on Lemma \ref{lemma:cb}, we design the SEIU procedure based on exact confidence bands derivation, where ``SEIU'' stands for \emph{Sequential Elimination under Input Uncertainty}. The procedure relies on the key parameters $\nu, \bar{\sigma}_{ij}$, and $\bar{L}_{ij}$ which are assumed to be known for the moment. We defer the discussion on how to estimate these parameters to Section \ref{sec:para-est}.

\textbf{Procedure: SEIU}

\begin{itemize}
\item
{\bf Initialization.}
 $\alpha \in (0,1), \eta \in (0,1), \mathcal{S} =\mathcal{I}, t=1$.

\item
{\bf Step 1.} At stage $t$, update the estimate of $\theta^c$ using new input data and run additional simulations. For each pair of designs $i > j$ in $\mathcal{S}$, compute $\hat{\delta}_{ij,t} = \hat{\mu}_{i,t} - \hat{\mu}_{j,t}$ using (\ref{eq:mae}).

\item
{\bf Step 2.} Compute the confidence bands $w_{ij,t} = u_{ij,t} + v_{ij,t}$ for each pair of designs $i<j$ using 
\begin{align}
u_{ij,t} = 2{\bar{\sigma}}_{ij} \sqrt{\frac{\ln\left(\sqrt{\frac{K(K-1)\pi^2}{3\alpha}}t\right)}{M(t) - M(t_\eta)}}, \label{eq:SEIU-su} 
\end{align}
\begin{align}
v_{ij,t} = \max_{1\le s \le S}\left\{\frac{ d\nu_s \bar{L}_{ij} \gamma_{s,\eta}(t)}{M(t) - M(t_\eta)} \sqrt{2\ln\left({\frac{2Sd_s \pi^2 (1-\eta)t^3}{ 3\alpha}} \right)} \right\}, \label{eq:SEIU-iu} 
\end{align}
where $\gamma_{s,\eta}(t) := \sum_{\ell = t_\eta+1}^t m(\ell) / \sqrt{N_s(\ell)}$.

\item 
{\bf Step 3.} 
For each design $i \in \mathcal{I}$, if 
\begin{equation} \label{eq:elim-cond}
    \min_{i > j} \left\{\hat{\delta}_{ij,t}  + w_{ij, t} \right\} < 0,
\end{equation}
then set $\mathcal{S} \gets \mathcal{S} \setminus \{i\}$. Go to {\bf Output} if $|\mathcal{S}|=1$; otherwise, set $t \gets t+1$ and go to \textbf{Step 1}.

\item
{\bf Output.} 
Output the only design in $\mathcal{S}$.
\end{itemize}

In the SEIU procedure presented above, $u_{i,t}$ and $v_{i,t}$ are chosen to ensure that PCS $\ge 1-\alpha$. Specifically, by substituting $x$ and $y$ with $u$ and $v$ in Lemma \ref{lemma:cb}, we have

\begin{equation*}
 \mathbb{P} \left\{\bigcup_{i<j} \left\{|\hat{\delta}_{ij,t} - \delta_{ij}(\theta^c)| > u_{ij,t} + v_{ij,t} \right\} \right\}  \le \dfrac{ 6 \alpha }{\pi^2 t^2},
\end{equation*} i.e., the probability of any design's estimate at any stage exiting its confidence band does not exceed $\dfrac{ 6 \alpha }{\pi^2 t^2}$, and as a result, the probability of false selection does not exceed $\alpha$, as shown in (\ref{eq:SE}) and (\ref{eq:SE2}). To run SEIU, a user first needs to specify the target PCS level $1-\alpha$ and a drop rate $\eta$ for the moving average estimator $\hat{\mu}_{i,t}$. Then, the procedure collects input data and runs simulations alternately, and removes design(s) from the survival set $\mathcal{S}$ whenever the elimination criterion (\ref{eq:elim-cond}) is met. Finally, the last surviving design is output as the selected best.

The following theorem shows the statistical validity of the procedure SEIU and characterizes the number of stages needed for the procedure to terminate, denoted as $\tilde{\tau}$, assuming full knowledge of $\nu_s, \bar{\sigma}_{ij}$ and $\bar{L}_{ij}$:

\begin{theorem} \label{thm:SEIU-guarantee}
Let Assumption \ref{assump:SE-IU} hold, and suppose that $\nu_s$,  ${\bar{\Sigma}}_{ij}$ and $\bar{L}_{ij}$ are known. Then, SEIU selects the best design with probability at least $1-\alpha$. Furthermore, 

\begin{equation} \label{eq:tau-bound}
\E[\tilde{\tau}] \le \max_{i \neq b} \min_{j \neq i} \tilde{\tau}_{ij} + \alpha,
\end{equation}
where
\begin{equation} \label{eq:tau_ij}
\tilde{\tau}_{ij} =\inf\{t \mid {w}_{ij,t} <\delta_{ij}(\theta^c) \} 
\end{equation}

\end{theorem}

The intuition behind the bound on $\E[\tilde{\tau}]$ is that if all confidence bands achieve perfect coverage, then they will eventually become narrow enough for us to distinguish between any two designs.  Hence, $\tilde{\tau}_{ij}$ is the worst-case time it takes for a design $j$ to eliminate design $i$, which would be infinity if $j$ is inferior to $i$. By taking the minimum of $\tilde{\tau}_{ij}$ over all designs $j$ other than $i$, we obtain the longest survival time for $i$. Thus, the total runtime will not exceed the maximum survival time of all suboptimal designs. Although $\tilde{\tau}_{ij}$ does not have a simple closed form, it clearly depends on two factors: (i) the widths of the confidence bands  $\{{w}_{ij,t}\}$; (ii) the difference between means, i.e., $\delta_{ij}(\theta^c)$. 

Like many elimination-based procedures, SEIU can be pre-stopped at any stage to output a subset $\mathcal{S}$ that contains the true best with probability at least $1-\alpha$. Moreover, the following corollary provides a selection at any stage with an indifference zone (IZ) type guarantee.

\begin{corollary} \label{co:SEIU}
Let Assumption \ref{assump:SE-IU} hold, and suppose that $\nu_s,  \bar{\sigma}_{ij}$, and $\bar{L}_{ij}$ are known. Following the SEIU procedure, at any stage $t$, with the remaining set of designs $\mathcal{S}$, the design with the largest moving average mean, i.e., the design $ i^* = \arg\max_{i \in \mathcal{S}_t}\hat{\mu}_{i,t}$, is $\epsilon_t$-optimal with probability $1-\alpha$. That is, with probability $1-\alpha$, $\mu_{i^*}(\theta^c) \ge \max\limits_{1\le i\le K}\mu_i(\theta^c) - \epsilon_t$, where $\epsilon_t = \max\limits_{j \in \mathcal{S}, j \neq i^*} w_{i^*j,t}$. 
\end{corollary}

Corollary \ref{co:SEIU} implies that before the procedure eliminates all the inferior designs, one can also make the selection at any intermediate stage $t$ by choosing the remaining design that has the largest sample mean. Such a selection will give an $\epsilon_t$-optimal design with confidence level $1-\alpha$. The value of $\epsilon_t$ depends on the confidence bands $w_{ij,t}$, which is
 the sum of $\{u_{ij,t}\}$ and $\{v_{ij,t}\}$ that account for SU and IU, respectively. 
 
 To gain more insight, consider a special case where $m(\ell) \equiv m_0$ and $n_s(\ell) \equiv n_0$, i.e., constant batch size of data and simulation over different stages. Then, (\ref{eq:SEIU-su}) and (\ref{eq:SEIU-iu}) can be simplified as
\begin{align}
u_{ij,t} = 2\frac{\bar{\sigma}_{ij}}{\sqrt{m_0}} \sqrt{\frac{\ln\left(\sqrt{\frac{K(K-1)\pi^2}{3\alpha}}t\right)}{t - t_\eta}}, \label{eq:u_it-equal}
\end{align}
\begin{align}
v_{ij,t} \approx  \max_{1\le s \le S} \left\{\frac{ 2d\nu_s \bar{L}_i}{\sqrt{n_0} (1+ \sqrt{\eta})} \sqrt{\frac{2\ln\left(\frac{2S d_s \pi^2 (t-t_\eta)^3}{3(1-\eta)^2 \alpha} \right)}{t}} \right\} 
. \label{eq:v_it-equal}
\end{align}
Thus, $ u_{i,t}$  and  $v_{i,t}$ are both of order $O(\sqrt{\ln t /t})$, which is common in SE procedures. Moreover, it can be seen from (\ref{eq:u_it-equal}) that the width of $u_{i,t}$ primarily depends on the magnitude of $\bar{\sigma}_{ij}/\sqrt{m_0}$, i.e., the discounted variance proxy after averaging $m_0$ output samples at each stage; clearly, a smaller $\bar{\sigma}_{ij}$ and a higher $m_0$ yield narrower bands. The drop rate $\eta$ also matters: notice that $u_{ij,t} \rightarrow \infty$ as $\eta \rightarrow 1$, meaning that the confidence band needs to expand in order to cover inflated SU with more samples thrown away.

Also clear from (\ref{eq:v_it-equal}) is that greater values of $\nu_s / \sqrt{n_0}$ and $\bar{L}_{ij}$, which capture the extent of IU and $\delta_{ij}$'s sensitivity to IU, respectively, would result in a wider $v_{ij,t}$. It can further be checked that $v_{ij,t} \rightarrow 0$ as $\eta \rightarrow 1$, which agrees with the intuition that the more stages we discard, the less we suffer from the biases $\{\delta_{ij}(\hat{\theta}_t) - \delta_{ij}(\theta^c)\}$ from previous stages.

In the SEIU procedure, we require full knowledge of the parameters $\bar{\sigma}_{i,j}, \nu_s, \bar{L}_{i,j}$, which is often unknown  and needs to be estimated in practice. We propose some heuristic methods and discussions for estimating thees unknown parameters, which can be found in the Electronic Companion \ref{ec:para-est}.

\section{ SEIU-MCB Procedures} \label{sec:SEIU-MCB}

In this section, we propose a more efficient SEIU-MCB procedure by deriving asymptotically valid confidence bands, which utilizes the result of Multiple Comparisons with the Best (MCB) proposed by \cite{chang1992optimal}. MCB is extended by \cite{song2019input} to construct confidence bands accounting for IU, where they exploit the jointly asymptotic normality of pairwise difference of performance estimators (referring to $\{\hat{\delta}_{ij,t}\}_{j\neq i}$) that consists of samples under the current estimated input distribution. We extend this jointly asymptotic normality to our moving-average estimator. The major challenge of establishing the asymptotic normality result for our moving-average estimator is that the estimator aggregates simulation outputs from different stages, which disallows the application of the central limit theorem by a direct partition of the total error into two parts caused respectively by IU and SU. Instead we adopt the Lindeberg-Feller Theorem (see proposition 2.2.7 in \cite{van2000asymptotic} ) to prove the asymptotic normality of our moving-average estimator.

Compared with SEIU, the SEIU-MCB procedure requires much less restrictive assumptions and achieves higher efficiency. Furthermore, the MCB framework helps to  avoid the usage of Boole's inequality across designs, and as a result, the asymptotic normality result yields much tighter confidence bands to control the cumulative error across stages. In addition,  our result explicitly characterizes how $\eta$ affects the tradeoff between IU and SU, and shows that the elimination  between any two designs can be potentially boosted by optimizing $\eta$.

\subsection{Multiple Comparison with the Best (MCB)} \label{sec:MCB}

The MCB result is shown as the following theorem.

\begin{theorem} \label{thm:MCB}
Let $\hat{\mu}_i$ be the estimator of $\mu_i$ for $i=1,2,\cdots,K$, $x^+ = max(x,0)$ and $x^- = -min(x,0)$. If for each fixed $i$,
\begin{equation} \label{eq:pairwise CI}
    P\left\{  \widehat{\mu}_{i,t} -\mu_i -  (\widehat{\mu}_{j,t} -\mu_j) \ge -w_{ij,t} \textbf{ for }i \neq j\right\} =1-\alpha,  
\end{equation}  
then we can make the joint probability statement
\[ \mathbb{P}\left\{ \mu_i - \max_{j\neq i} \mu_j \in [W_i^-,W_i^+] \quad \textbf{for i }  = 1,2,\cdots K 
\right\} = 1-\alpha,\]
where $W_i^-$ and $W_i^+$ can be computed as 
\begin{equation} \label{eq:D}
\begin{aligned}
     &W_i^+ = \left(\min_{j\neq i} [\widehat{\mu}_{i,t} - \widehat{\mu}_{j,t} + w_{ij,t}]\right)^+, \mathbb{G} = \{i : W_i^+ > 0\}\\ 
      &W_i^- = \left\{ 
      \begin{aligned}
      &0 & \text{if } \mathbb{G} = \{i\} \\
      -& \left(\min_{j\neq i} [\widehat{\mu}_{i,t} - \widehat{\mu}_{j,t} - w_{ji,t}]\right)^-  &\text{otherwise.}
      \end{aligned}\right.
\end{aligned}
\end{equation}
\end{theorem}
Theorem \ref{thm:MCB} says that if we are provided with the simultaneous pairwise confidence bands $w_{ij,t}$ for each $i$, then we can get a simultaneous confidence band for $i$ which measures how the design $i$ is compared with the remaining best. If the upper bound $W_i^+$ is zero, then we eliminate design $i$ since it is inferior to the remaining best design. If $W_i^-$ is zero, then we  directly choose design $i$ as the optimal design. 

Now, the key to the problem remains as how to compute the simultaneous pairwise confidence band $w_{ij,t}$, which depends on the joint distribution of 
$$\Delta_{i,t} \triangleq (\hat{\delta}_{i1,t} - \delta_{i1}(\theta^c),\cdots,\hat{\delta}_{ii-1,t} - \delta_{ii-1}(\theta^c),\hat{\delta}_{ii+1,t} - \delta_{ii+1}(\theta^c),\cdots,\hat{\delta}_{iK,t} - \delta_{iK}(\theta^c)).$$
However, usually the joint distribution is hardly computable, and even so, the computation can be expensive. Hence, instead of finding the exact joint distribution, we show the asymptotic joint distribution in the following Section \ref{sec:normality}.

\subsection{Asymptotic normality} \label{sec:normality}
In Section \ref{sec:exact}, a critical assumption underpinning the SEIU framework is the compactness of input distribution's parameter space $\Theta$, which plays an important role in rigorously bounding the variation of simulation output in the presence of IU. We now relax this condition and shift to the following assumption.
\begin{assumption} \label{assump:CLT}
For all $i \in \mathcal{I}$ and $s \in \{1,2,\ldots, S\}$,
\quad
\begin{enumerate}
\item[(i)]
$\Sigma_{D,s} := Cov(D_{s,1})$ exists.

\item[(ii)]
$\Sigma(\theta)$ exists and is continuous for all $\theta \in \Theta$, where $\Sigma_{ij}(\theta) = Cov[X_i(\theta),X_j(\theta)]$.
\item[(iii)]
$\mu_i(\cdot)$ is twice continuously differentiable in $\Theta$.
\item[(iv)]
$\{n_s(t)\}$ and $\{m(t)\}$ are uniformly bounded. Furthermore, there exist positive constants $\bar{n}_s$ and $\bar{m}$ such that $N_s(t)/t \rightarrow \bar{n}_s$ and $M(t) / t \rightarrow \bar{m}$ as $t\rightarrow \infty$.
\end{enumerate}
\end{assumption}

In Assumption \ref{assump:CLT} (i) and (ii), the existence of input data's covariance and simulation output's variance is far less stringent than the sub-Gaussianity conditions in Assumption \ref{assump:SE-IU} and holds in most real-world applications. The smoothness of $\mu_i(\cdot)$ in (iii) is also a reasonable assumption for many parametric families of $P_\theta$. Moreover, (iv) only requires the limit of input data and simulation batch sizes to exist in the Ces\`{a}ro sense, and the uniform boundedness of $\{n_s(t)\}$ and $\{m(t)\}$ is guaranteed in practice. With Assumption \ref{assump:CLT}, we establish the following asymptotic result for ${\Delta}_{i,t}$.

\begin{theorem}
\label{thm:joint normality}
Let Assumption \ref{assump:CLT} holds. Then, for any $\eta \in [0,1)$ and any design $i$, 
\begin{equation} \label{eq:mae-CLT}
\sqrt{t} \Delta_{i,t} \Rightarrow \mathcal{N}(0, \Sigma_{i, \infty}), \quad \text{ as } t\rightarrow \infty,
\end{equation}
where $\Rightarrow$ means convergence in distribution, $\mathcal{N}$ denotes the normal distribution, and
\begin{equation} \label{eq:limit-variance}
\Sigma_{i, \infty}(j,j^\prime) := \lambda_{I,\eta} \nabla \delta_{ij}(\theta^c)^\intercal \bar{\Sigma}_D \nabla \delta_{ij^\prime} (\theta^c) + \lambda_{S,\eta} \bar{m}^{-1} \textbf{Cov}\left( X_i(\theta^c)-X_j( \theta^c),X_i(\theta^c) - X_{j^\prime}(\theta^c)\right),
\end{equation}
in which $\bar{\Sigma}_D := diag(\bar{n}^{-1}_1 \Sigma_{D,1}, \ldots, \bar{n}^{-1}_S \Sigma_{D,S})$ and 
\begin{equation}  \label{eq:rho}
\lambda_{I,\eta} := \frac{2}{1-\eta} + \frac{2\eta \ln\eta}{(1-\eta)^2}, \quad \lambda_{S,\eta} := \frac{1}{1-\eta}.
\end{equation}

\end{theorem} 

Theorem \ref{thm:joint normality} shows that the asymptotic covariance matrix of $\Delta_{i,t}$ is the weighted sum of two components, 
\begin{equation} \label{eq:sigma-decomp}
\Sigma_{i,IU}(j,j^\prime):= \nabla \delta_{ij}(\theta^c)^\intercal \bar{\Sigma}_D \nabla \delta_{ij^\prime} (\theta^c),  \quad \Sigma_{i,SU}(j,j^\prime):=  \bar{m}^{-1} \textbf{Cov}\left( X_i(\theta^c)-X_j( \theta^c),X_i(\theta^c) - X_{j^\prime}(\theta^c)\right), 
\end{equation}
which quantify the variance induced by IU and SU, respectively. Furthermore, if we let $ \partial_{\theta_s} \delta_{ij}(\theta^c)$ denote the partial derivative $\partial \delta_{ij}(\theta^c) / \partial \theta_s$, then we can decompose $\Sigma_{i,IU}$ as
\begin{equation*}
\Sigma_{i,IU} (j,j^\prime)= \sum_{s=1}^S \partial_{\theta_s} \delta_{ij}(\theta^c)^\intercal \Sigma_{D,s}\partial_{\theta_s} \delta_{ij^\prime}(\theta^c) / \bar{n}_s
\end{equation*}
which attributes the variance to each individual input distribution. The $O(1/\bar{n}_s)$ and $O(1/\bar{m})$ convergence rate in (\ref{eq:limit-variance}) are standard. However, the weights $\lambda_{I,\eta}$ and $\lambda_{S,\eta}$ are less common and therefore deserve a closer look. 

The expressions of $\lambda_{I,\eta}$ and $\lambda_{S,\eta}$ are shaped by two major factors: (i) the structure imposed on $\hat{\theta}_t$ (see Assumption \ref{assump:estimator}); (iii) the choice of estimator $\hat{\mu}_{i,t}$. In particular, the sample average structure of $\hat{\theta}_t$ makes asymptotic analysis more tractable, since it allows the correlated biases $\{\delta_{ij}(\hat{\theta}_t) - \delta_{ij}(\theta^c)\}$ to be decoupled into a weighted sum of i.i.d. samples $D_{s,t}$. To get a sense of the limiting behavior of $\lambda_{I,\eta}$ and $\lambda_{S,\eta}$, we investigate two extreme cases.

\begin{enumerate}
\item[{\bf Case 1}.]
If $\eta \rightarrow 1$, then $\lambda_{I,\eta} \rightarrow 1$ and $\lambda_{S,\eta} \rightarrow  \infty$, which coincides with the intuition that IU can be reduced by dropping samples, but there will be fewer simulation outputs to average out SU.

\item[{\bf Case 2}.]
If $\eta = 0$, then $\lambda_{I,\eta} = 2$ and $\lambda_{S,\eta} = 1$, i.e.,

\begin{equation} \label{eq:0-eta-dynamic}
\sqrt{t} \left[\hat{\delta}_{ij,t} - \delta_{ij}(\theta^c)\right] \Rightarrow \mathcal{N}(0, 2\Sigma_{i, IU} + \Sigma_{i, SU}), \quad \text{as } t\rightarrow \infty.
\end{equation}

To put (\ref{eq:0-eta-dynamic}) in perspective, we cite an asymptotic normality result from \cite{wu2017OCBAIU} for the case of static input data. Suppose that there is only a single batch of input data, which we use to compute the parameter estimate $\hat{\theta}_t$ and then run simulations under $P_{\hat{\theta}_t}$. Let $\tilde{\mu}_{i,t} = \frac{1}{M_i(t)} \sum_{r=1}^{M_i(t)} X_{i,r} (\hat{\theta}_t)$ be the estimator for $\mu_i(\theta^c)$. Under some mild conditions, we have
\begin{equation} \label{eq:0-eta-static}
\sqrt{t} \left[\tilde{\delta}_{ij,t} - \delta_{ij}(\theta^c)\right] \Rightarrow \mathcal{N}(0, \Sigma_{i, IU} + \Sigma_{i, SU}), \quad \text{as } t\rightarrow \infty.
\end{equation}
By comparing (\ref{eq:0-eta-dynamic}) with (\ref{eq:0-eta-static}), it can be seen that the weight of $\Sigma_{i, IU}$ is doubled in the case of streaming input data while the weight of $\Sigma_{i, SU}$ remains the same, where the inflation of $\Sigma_{i, IU}$ is due to estimation error accumulated over multiple stages.\\
\end{enumerate}

With Theorem \ref{thm:joint normality}, we approximate the joint distribution of
 $\Delta_{i,t}$ as:
$$\Delta_{i,t} \mathop{\approx}\limits^D \mathcal{N} (0,\Sigma_{i,\infty} / t). $$
Then $\{w_{ij,t},\ j\neq i\}$ is a $(K-1)$-multidimensional quantile. When given the confidence level $1-\alpha$, the choice of $K-1$-dimensional quantile is not unique since the degree of freedom is $K-1$. We can reduce the degree of freedom to 1 by letting   
$$ \mathcal{P} \left( \hat{\delta}_{ij,t}  - \delta_{ij}(\theta^c) \ge - w_{ij,t} \right)  = \mathcal{P} \left( \hat{\delta}_{ij',t}  - \delta_{ij'}(\theta^c) \ge - w_{ij',t} \right) \quad \forall j \neq j'.$$
i.e., all the confidence bands result in the same probability coverage. This can be done easily under the asymptotic normality of $\hat{\delta}_{ij,t}$. To be specific, for each $i$, we find the smallest $w_{i,t}$, such that 
$$ \mathcal{P} \left( \hat{\delta}_{ij,t}  - \delta_{ij}(\theta^c) \ge - w_{i,t} \sigma_{ij,t},\quad \forall j \neq i\right) \ge 1-\alpha, $$ 
where $\sigma_{ij,t}$ refers to the variance of $\hat{\delta}_{ij,t}$ and we assume $\hat{\delta}_{ij,t}$ approximately follows a normal distribution centered at $\delta_{ij}(\theta^c)$. We present the SEIU-MCB procedure in the next section.

\subsection{SEIU-MCB procedure}

With the elimination and selection rule stated in Section \ref{sec:MCB}, we have the SEIU-MCB procedure as follows:

\paragraph{ \textbf{ SEIU-MCB}}
\begin{itemize}
    \item \textbf{Initialization.} $\alpha \in (0,1), \eta \in (0,1), t=1, \mathcal{S} = \mathcal{I}$ .
    \item \textbf{Step 1.} At stage t, update the estimate of $\theta^c$ using new input data and run additional simulations using CRN. For each system $i \in \mathcal{S}$, compute $\hat{\mu}_{i,t}$ using (\ref{eq:mae}).
    \item \textbf{Step 2.} Compute the simultaneous pairwise confidence bands $w_{ij,t}$ using (\ref{eq:pairwise CI}) and (\ref{eq:mae-CLT}) with confidence level set as $\frac{6\alpha}{\pi^2 t^2}$. Then compute $W_i^-,W_i^+$  using (\ref{eq:D}).
    \item \textbf{Step 3.} For each $i$, if $W_i^- = 0$, then $\mathcal{S} = \{i\}$; esle if $W_i^+ = 0$, $\mathcal{S} \leftarrow \mathcal{S}$ \textbackslash $\{i\}$. If $|\mathcal{S}| = 1$, then go to \textbf{Output}.  Increment $t$ by 1 and go to \textbf{Step 1}.
    \item \textbf{Output.} Output the only design in $\mathcal{S}$ as the optimal system.
\end{itemize}
As we can see from the procedure, compared with the SEIU where we have the sub-Gaussian and Lipschitz continuous assumption, SEIU-MCB requires much less knowledge of those parameter estimates. This is attributed to the general asymptotic normality result established in Section \ref{sec:normality}, by which we can compute the confidence bands easily.

Moreover, from Section \ref{sec:normality}, we can see the drop rate balances the impact of input uncertainty and simulation uncertainty characterized by the asymptotic corvariance matrix. Detailed discussions on minimizing such impact through $\eta$ are presented in \ref{ec:opt-eta}.

\section{Extension to the Indifference Zone (IZ) setting} \label{sec:indifference zone}
In Section \ref{sec:exact} and \ref{sec:SEIU-MCB}, the SEIU and SEIU-MCB procedures are designed to proceed until there is only one design left in the remaining candidate set, because of our goal to identify the unique optimal design. However, in practice, selecting one of the ``near-optimal" designs is often sufficient, which is the motivation behind the indifference-zone R\&S procedures. To be more specific, an IZ R\&S procedure aims to find an $\epsilon$-optimal design with a given $\epsilon > 0$, i.e., a design $i^*$ such that $\mu_{i^*} \ge \max\limits_{1\le i\le K}\mu_i - \epsilon$, where the $\epsilon$ is often referred to as the IZ preference. Such an $\epsilon$ compromise can sometimes dramatically save simulation effort, especially when the optimal performance $\mu_b(\theta^c)$ is close to the second optimal performance $\mu_{b'}(\theta^c)$. Inspired by Corollary \ref{co:SEIU}, we can also extend our procedures to the IZ setting, and terminate the procedure when either of the cases happens: (i) there is only one design remaining, or (ii) $\epsilon_t < \epsilon$.  Denote by SEIU(IZ) and SEIU-MCB(IZ), the procedures with IZ setting. We add an additional step 4 in both procedures to verify whether an $\epsilon$-optimal is found.

\textbf{Step 4 in SEIU(IZ)}

\begin{itemize}

\item Let $i^* = \arg\max\limits_{i \in \mathcal{S}_t} \hat{\mu}_{i,t}$. If 
\[ \max\limits_{j \in \mathcal{S}, j\neq i^*} w_{i^*j,t} < \epsilon,\]
then set $\mathcal{S} \leftarrow \{i^*\}$ and go to {\bf Output}; otherwise, set $t \leftarrow t+1$ and go to \textbf{Step 1}.

\end{itemize}

 \textbf{ Step 4 in SEIU-MCB(IZ)}
\begin{itemize}
    
    \item 
    Let $\mathcal{I} = \left\{ i \in \mathcal{S} | -W_i^- < \epsilon \right\} $. If $|\mathcal{I}| \ge 1$, let $\hat{i}^* = \arg\max_{i \in \mathcal{I}} \hat{\mu}_{i,t}$, set $\mathcal{S} \leftarrow \{\hat{i}^*\}$, and go to \textbf{Output}. Otherwise,  increment $t$ by 1 and go to \textbf{Step 1}.
    
\end{itemize}

With similar intuition as Theorem \ref{thm:SEIU-guarantee}, the following corollary provides the statistical validity of SEIU(IZ) and characterization of the number of stages needed to  terminate the algorithm.

\begin{corollary}\label{co:IZ}
Let Assumption \ref{assump:SE-IU} hold, and suppose that $\nu_s,  \bar{\sigma}_{ij}$, and $\bar{L}_{ij}$ are known.
 Then, SEIU(IZ) selects an $\epsilon$-optimal design with probability at least $1-\alpha$. Furthermore, let $\tau $ denote the number of stages it takes to terminate. Then
\begin{equation} \label{eq:tau}
{\tau} \le \inf\{t \mid \max_{1 \le i < j \le K} {w}_{ij,t} < \epsilon\}  \qquad a.s. 
\end{equation}
\end{corollary}

Corollary \ref{co:IZ} shows that an advantage of using the IZ criterion is that we can characterize the number of stages needed to terminate in a stronger almost sure sense. The intuition is that with the IZ criterion, when the confidence band is small enough we will always either identify the best design or reach specified the accuracy($\epsilon$-optimal).   Again, although we do not have a closed form of the upper bound on $\tau$, it clearly depends on the width of the confidence bands, the true difference $\delta_{ij}(\theta^c)$, and the IZ preference $\epsilon$. 

Another advantage of our IZ procdures, compared with the well-known KN procedure (see \cite{kim2001fully}), is that our procedures compute the confidence bands independently of IZ preference and terminate when either the best selection is made or the IZ preference is satisfied. In contrast, the KN procedure computes the confidence bands by considering a so-called ``slippage configuration", where the best design has expected performance exactly $\delta$ better than the second best. If the actual difference is bigger than $\delta$, the KN procedure will compute a larger confidence band and make redundant simulations. In our procedures, if we set $\delta$ smaller than the true difference of the expected performance between the best and the second best designs, it will not affect the performance of the procedures since they will act at least as good as the SEIU and SEIU-MCB procedures.

\section{Numerical Results} \label{sec:numerical}
 The numerical study mainly consists of three parts: (i) comparing our procedures with the KN++ procedure which does not consider input uncertainty; (ii) investigating the performance of all our proposed procedures; (iii) optimizing $\eta$ to boost the efficiency of SEIU-MCB.

\subsection{Test problems} \label{sec:test-probs}
Two problems are used for numerical testing. One is to minimize a quadratic objective function with a single source of IU, and the other is a more complex production-inventory optimization problem from \cite{hong2009estimating} with multiple independent sources of IU. A detailed description of the two problems can be found in \ref{ec:quad-prob} and \ref{ec:prod-inv-prob}. In all subsequent experiments, we consider the following settings unless otherwise stated.
\begin{itemize}
\item[(i)]
Equal batch size: $n_s(t) = n,  m(t) = m$ for some fixed $n,m  \in \Z^+$.

\item[(ii)]
Random batch size: $n_s(t)$ and $m(t)$ are i.i.d. samples drawn from the uniform distribution on the lattice $\{\bar{k},2\bar{k},\ldots, 5\bar{k}\}$ for some fixed $\bar{k}$.
\end{itemize}
More specifically, in the case of unequal batch size, the random samples of $n_s(t)$ and $m(t)$ are independent of input data and simulation outputs.

\subsection{Experiment results} \label{sec:exp-result}
In the following sections, we first use the simple quadratic example to show the necessity of accounting for input uncertainty by comparing the KN++ procedure with our two IZ-type procedures: SEIU(IZ) and SEIU-MCB(IZ).  We then demonstrate the applicability and effectiveness of all our proposed procedures on the more general production-inventory example, where we have multiple input distributions and allow different batch sizes for both input data and simulation budget. At last, we show  that the performance of SEIU-MCB can be further boosted by optimizing $\eta$. 

\subsubsection{Necessity of considering IU} \label{sec:IU control}
We first use the quadratic problem with equal batch size and $\mathcal{I} = \{ \theta^c\ + 0.3\cdot i: i\in [-5,5] \cap \mathcal{Z}\},$  to test the three procedures with IZ setting:
 SEIU(IZ), SEIU-MCB(IZ), and KN++. The KN++ procedure (\cite{kim2006asymptotic}) is an extension of the KN procedure which allows non-normal observation data and updating the variance estimator. The KN++ procedure uses average batch mean to estimate the variance and updates the variance estimator as more simulation outputs are collected, and hence it can be directly applied to our example with equal batch size. Specifically, it is applied by ignoring the existence of input uncertainty and carrying out simulations in each stage conditioned on the current estimated input parameter. Table  \ref{tab:different delta} shows the empirical PCS and average termination stage along with their 95\% confidence intervals, based on 100 independent runs  of the three procedures with different values of the IZ parameter $\delta$ and different input data batch sizes. The empirical PCS is computed as the percentage of algorithm runs that correctly select the true optimal design. The target PCS is set as $90\%$.  Some observations can be made as follows.

\begin{table}[ht]
\caption{PCS and termination stage for quadratic example (equal batch size)}
\begin{center}
\scalebox{0.8}{
\begin{tabular}{|c||cc|cc|cc}
    \hline\hline
    \multicolumn{7}{c}{$\eta = 0.5,m_s = n_s = 10 $}\\
    \hline
     $\delta$& \multicolumn{2}{c|}{SEIU(IZ)} & \multicolumn{2}{c|}{SEIU-MCB(IZ)} & \multicolumn{2}{c|}{KN++(IZ)}\\
   \hline
   \multirow{2}{*}{0.3} &  \multicolumn{2}{c|}{$98\% \pm 1.7\%$}  &  \multicolumn{2}{c|}{$82\% \pm 5\%$} &  \multicolumn{2}{c|}{$75\% \pm 8\%$}  \\
   &  \multicolumn{2}{c|}{$77 \pm 6.2$}  &  \multicolumn{2}{c|}{$17 \pm 1.9$} &  \multicolumn{2}{c|}{$15 \pm 1.5$} \\
   \hline
   \multirow{2}{*}{0.2} &  \multicolumn{2}{c|}{$99\% \pm 0.6\%$}  &  \multicolumn{2}{c|}{$88\% \pm 4\%$} &  \multicolumn{2}{c|}{$83\% \pm 7\%$}  \\
   &  \multicolumn{2}{c|}{$111 \pm 6.7$}  &  \multicolumn{2}{c|}{$18 \pm 1.4$} &  \multicolumn{2}{c|}{$21 \pm 1.3$} \\
   \hline
   \multirow{2}{*}{0.1} &  \multicolumn{2}{c|}{$100\% \pm 0\%$}  &  \multicolumn{2}{c|}{$90\% \pm 2\%$} &  \multicolumn{2}{c|}{$85\% \pm 6\%$}  \\
   &  \multicolumn{2}{c|}{$521 \pm 31$}  &  \multicolumn{2}{c|}{$25 \pm 0.9$} &  \multicolumn{2}{c|}{$34 \pm 5.4$} \\
   \hline\hline
  \end{tabular}} \qquad
  \scalebox{0.8}{ \begin{tabular}{|c||cc|cc|cc}
   \hline \hline
     \multicolumn{7}{c}{$\eta = 0.5,m_s = 10, \delta = 0.2$}\\
    \hline
     \ $n_s$& \multicolumn{2}{c|}{SEIU(IZ)} & \multicolumn{2}{c|}{SEIU-MCB(IZ)} & \multicolumn{2}{c|}{KN++(IZ)}\\
   \hline
   \multirow{2}{*}{5} &  \multicolumn{2}{c|}{$99\% \pm 0.6\%$}  &  \multicolumn{2}{c|}{$87\% \pm 5\%$} &  \multicolumn{2}{c|}{$74\% \pm 5\%$}  \\
   &  \multicolumn{2}{c|}{$170 \pm 9.7$}  &  \multicolumn{2}{c|}{$25 \pm 1.6$} &  \multicolumn{2}{c|}{$24 \pm 1.5$} \\
   \hline
   \multirow{2}{*}{10} &  \multicolumn{2}{c|}{$99\% \pm 0.6\%$}  &  \multicolumn{2}{c|}{$88\% \pm 4\%$} &  \multicolumn{2}{c|}{$83\% \pm 7\%$}  \\
   &  \multicolumn{2}{c|}{$111 \pm 6.7$}  &  \multicolumn{2}{c|}{$18 \pm 1.4$} &  \multicolumn{2}{c|}{$21 \pm 1.3$} \\
   \hline
   \multirow{2}{*}{15} &  \multicolumn{2}{c|}{$97\% \pm 1.8\%$}  &  \multicolumn{2}{c|}{$89\% \pm 4\%$} &  \multicolumn{2}{c|}{$86\% \pm 3\%$}  \\
   &  \multicolumn{2}{c|}{$89 \pm 5.0$}  &  \multicolumn{2}{c|}{$16 \pm 0.9$} &  \multicolumn{2}{c|}{$18 \pm 1.0$} \\
   \hline\hline
    \end{tabular}}
\end{center}
 \label{tab:different delta}
 \end{table}

\begin{enumerate}
\item
The empirical PCS of the KN++ procedure is less than the target PCS 0.9 across all settings.  The true difference between the best design and the second best is $0.16$. Hence, with $\delta = 0.1$, it is sufficient to identify the best design. Furthermore, by examining the estimated input parameters when incorrect selection or elimination happens, we found that the KN++ procedure makes the false selection mainly when procedures terminate in earlier stages, where input parameters are estimated more inaccurately. This implies the necessity of controlling the input error. 
\item
 The SEIU-MCB(IZ) achieves the target PCS when $\delta = 0.1$ and shoots a higher PCS than KN++ in all settings with mostly earlier termination. This implies the high efficiency of the SEIU-MCB procedure, even when it takes IU into consideration.
\item Comparing SEIU(IZ) and SEIU-MCB(IZ), the latter procedure requires much less simulation effort and achieves more accurate empirical PCS.  It shows that the usage of asymptotic normality and MCB in SEIU-MCB significantly relieves the conservativeness in the SEIU procedure. The  empirical  PCS  of SEIU(IZ) always overshoots the target PCS, revealing the conservativeness of the procedure. The conservativeness is mainly due to the Bonferroni inequality used in deriving the confidence bands in the SEIU procedure.
\end{enumerate}

\subsubsection{Inventory-production problem: a more general setting} \label{sec:Inv-Pro example}
 In this section, we show the applicability and effectiveness of all our proposed procedures using an inventory-production example, where we have multiple unknown input distributions, with random batch sizes for both input and simulation data. We set the maximal production amount $R^* = 4$, holding  cost $c_H = 0.5$, and backlog cost $c_B = 1$. We consider two scenarios with different input dimension $S$ and input parameter $\theta^c$: $S = 2$, $\theta^c = [4, 5]^\intercal$; and $S=4$, $\theta^c = [4, 5, 3, 3]^\intercal$.  The optimal order-up-to quantity is selected among $\mathcal{I} =  \{1, 2, \ldots, 10\}$. For IZ procedures, we set the IZ preference $\delta = 0.3$.  Table \ref{tab:inventory} shows the empirical PCS and average termination stage along with their 95\% confidence intervals, based on 100 independent runs of each procedure. We summarize our observations as follows:
\begin{enumerate}
    \item The SEIU and SEIU(IZ) procedures obtain the $100\%$ empirical PCS across all settings, overshooting the target PCS. The improved procedures SEIU-MCB and SEIU-MCB(IZ) significantly relieve the conservativeness and achieve an empirical PCS much closer to the target while terminating in much less number of stages.
    \item Comparing the two scenarios where the number of unknown input distributions $S=2$ and $S=4$, the actual difference between the best design and the second best is similar, but all four procedures tend to terminate in later stages when $S= 4$ than $S = 2$. Specifically, the termination stages of SEIU and SEIU(IZ) increase roughly by a factor of 3, while those of SEIU-MCB and SEIU-MCB(IZ) increase by a factor of less than 2. Their empirical PCS also increase and  overshoot the target PCS slightly when $S=4$. This implies higher dimension of input uncertainty can cause a larger impact on both the PCS and the termination stage, and SEIU-MCB and SEIU-MCB(IZ) are more robust to this impact.
    
\end{enumerate}
\begin{table}[ht]
   \caption{PCS and termination stage for production inventory problem with random batch size ($\eta = 0.5 $)}
  \vspace{-5mm}
    \begin{center}  
    \scalebox{0.8}{
   \begin{tabular}{|c||cc|cc|cc|cc|}
   \hline \hline
    
    \multicolumn{9}{c}{$\theta^c = [4, 5]^\intercal (S=2)$ }\\
    \hline
     \ $\bar{k}$& \multicolumn{2}{c|}{SEIU} & \multicolumn{2}{c|}{SEIU(IZ)} & \multicolumn{2}{c|}{SEIU-MCB}&
     \multicolumn{2}{c|}{SEIU-MCB(IZ)}\\
   \hline
   \multirow{2}{*}{10} &  \multicolumn{2}{c|}{$100\% \pm 0\%$}  &  \multicolumn{2}{c|}{$100\% \pm 0\%$} &  \multicolumn{2}{c|}{$90\% \pm 4\%$} &  \multicolumn{2}{c|}{$81\% \pm 6\%$}  \\
   &  \multicolumn{2}{c|}{$364 \pm 16$}  &  \multicolumn{2}{c|}{$205 \pm 13$} &  \multicolumn{2}{c|}{$44 \pm 4.7$} 
   & \multicolumn{2}{c|}{$17 \pm 1.2$}\\
   \hline
   \multirow{2}{*}{20} &  \multicolumn{2}{c|}{$100\% \pm 0\%$}  &  \multicolumn{2}{c|}{$100\% \pm 4\%$} &  \multicolumn{2}{c|}{$94\% \pm 3\%$} &  \multicolumn{2}{c|}{$87\% \pm 5\%$} \\
   &  \multicolumn{2}{c|}{$156 \pm7.9$}  &  \multicolumn{2}{c|}{$90 \pm 5.5$} &  \multicolumn{2}{c|}{$25 \pm 2.4$}&  \multicolumn{2}{c|}{$11 \pm 0.6$} \\
   \hline
   \multirow{2}{*}{30} &  \multicolumn{2}{c|}{$100\% \pm 0\%$}  &  \multicolumn{2}{c|}{$100\% \pm 4\%$} &  \multicolumn{2}{c|}{$96\% \pm 2\%$}  &  \multicolumn{2}{c|}{$90\% \pm 4\%$}\\
   &  \multicolumn{2}{c|}{$102 \pm 5.7$}  &  \multicolumn{2}{c|}{$54 \pm 3.8$} &  \multicolumn{2}{c|}{$19 \pm 1.4$}&  \multicolumn{2}{c|}{$7.5 \pm 0.4$} \\
   \hline\hline
   
    \end{tabular}
    \quad\quad
    \begin{tabular}{|c||cc|cc|cc|cc|}
   \hline \hline
    \multicolumn{9}{c}{$\theta^c = [4, 5, 3, 3]^\intercal( S=4)$ }\\
    \hline
     \ $\bar{k}$& \multicolumn{2}{c|}{SEIU} & \multicolumn{2}{c|}{SEIU(IZ)} & \multicolumn{2}{c|}{SEIU-MCB}&
     \multicolumn{2}{c|}{SEIU-MCB(IZ)}\\
   \hline
   \multirow{2}{*}{10} &  \multicolumn{2}{c|}{$100\% \pm 0\%$}  &  \multicolumn{2}{c|}{$100\% \pm 0\%$} &  \multicolumn{2}{c|}{$93\% \pm 3\%$} &  \multicolumn{2}{c|}{$90\% \pm 4\%$}  \\
   &  \multicolumn{2}{c|}{$1137 \pm 152$}  &  \multicolumn{2}{c|}{$528 \pm 84$} &  \multicolumn{2}{c|}{$60 \pm 9.6$} 
   & \multicolumn{2}{c|}{$30 \pm 3.1$}\\
   \hline
   \multirow{2}{*}{20} &  \multicolumn{2}{c|}{$100\% \pm 0\%$}  &  \multicolumn{2}{c|}{$100\% \pm 4\%$} &  \multicolumn{2}{c|}{$95\% \pm 2\%$}&  \multicolumn{2}{c|}{$92\% \pm 3\%$}  \\
   &  \multicolumn{2}{c|}{$522 \pm 102$}  &  \multicolumn{2}{c|}{$235 \pm 58 $} &  \multicolumn{2}{c|}{$42 \pm 6$}  &  \multicolumn{2}{c|}{$18 \pm 1.6$}\\
   \hline
   \multirow{2}{*}{30} &  \multicolumn{2}{c|}{$100\% \pm 0\%$}  &  \multicolumn{2}{c|}{$100\% \pm 4\%$} &  \multicolumn{2}{c|}{$96\% \pm 2\%$}&  \multicolumn{2}{c|}{$90\% \pm 4\%$}  \\
   &  \multicolumn{2}{c|}{$301 \pm 33$}  &  \multicolumn{2}{c|}{$139 \pm 30$} &  \multicolumn{2}{c|}{$36 \pm 4.1$}&  \multicolumn{2}{c|}{$14 \pm 1.2$} \\
   \hline\hline
    \end{tabular}
    }
\end{center}
\label{tab:inventory}
\end{table}

\subsubsection{Optimizing the drop rate $\eta$} \label{sec:opt-eta}
Finally, we explore boosting the performance of SEIU-MCB by optimizing $\eta$. The experiment setting and result can be found in \ref{ec:opt-eta-experiment}. The result shows the optimized $\eta$ improves both the PCS and the termination stage. 

\section{Conclusion and Future Directions} \label{sec:conclusion}
We study Ranking and Selection under input uncertainty where input data arrive in time-varying batches over time. A moving average estimator is proposed to aggregate the simulation outputs under different input models across time stages due to the limited simulation budget at each stage. We further propose the SEIU and SEIU-MCB procedures by respectively deriving the exact and asymptotic confidence bands in a substantially extended Sequential Elimination framework. We also derive the corresponding SEIU(IZ) and SEIU-MCB(IZ) procedures to extend our algorithms to the indifference zone setting. We analyze the impact of the drop rate of the moving average estimator on the procedures. Numerical results show the necessity of our proposed procedures under input uncertainty and the statistical validity to achieve the target PCS. The SEIU-MCB  procedure is highly efficient for practical use and can be accelerated by optimizing the drop rate.

There are several future research directions. First, other methods can be used to  update the input model and aggregate simulation outputs over time stages. Second, other R\&S procedures can be extended to the setting of streaming input data, especially when (asymptotic) normality of the aggregated performance estimate could be established. Third, other types of performance measure, such as quantiles, can be considered in data-driven ranking and selection problems.


\paragraph{ACKNOWLEDGMENT}{The authors gratefully acknowledge the support by the National Science Foundation under Grant DMS2053489 and Air Force Office of Scientific Research under Grant FA9550-19-1-0283 and Grant FA9550-22-1-0244.}

%
%
%


\bibliographystyle{plain} 
\bibliography{SEIU} 



\appendix
\section{Parameter Estimation} \label{ec:para-est}
We propose some heuristic methods for estimating the unknown parameters. To begin with, if the transformed input data $\{D_{t}\}$ and the simulation outputs $\{X_i\}$ are uniformly bounded, then $\nu_s$ and $\bar{\sigma}_{ij}$ can be computed using explicit bounds of their support. For example, if $X_i(\theta),X_j(\theta) \in [a, b]$ for all $\theta \in \Theta$, then $(b-a)$ would be a valid value of $\bar{\sigma}_{ij}$, but it tends to be conservative. An alternative is to approximate $\nu$ and $\bar{\sigma}_{ij}$ using the sample standard deviation of $D_{t}$ and $X_i - X_j$. For $\bar{L}_{ij}$, assuming that $\delta_{ij}(\cdot)$ is sufficiently smooth, by the mean value theorem and H\"older's inequality,
\begin{equation*}
|\delta_{ij}(\theta_1) - \delta_{ij}(\theta_2)| = \nabla \delta_{ij}(\tilde{\theta})^\intercal (\theta_1 - \theta_2) \le \sup_{\theta\in\Theta} \|\nabla\delta_{ij}(\theta) \|_\infty \|\theta_1 - \theta_2\|_1,
\end{equation*}
where $\nabla$ denotes the gradient operator and $\tilde{\theta}$ is a convex combination of $\theta_1$ and $\theta_2$. The issue is that $\sup_{\theta\in\Theta} \|\nabla\delta_{ij}(\theta) \|_\infty$ is both conservative and difficult to estimate. Empirically, we find that approximating $\bar{L}_{ij}$ with $\|\nabla \delta_{ij}(\hat{\theta}) \|_\infty$ usually preserves statistical validity for the procedure.In addition, $\nabla \mu_{i}(\theta)$ can be estimated via the likelihood ratio estimator
\begin{equation} \label{eq:est-L}
\widehat{\nabla \mu}_{i}(\theta) = \frac{1}{n_0} \sum_{r=1}^{n_0} X_{i,r}(\theta) \frac{\nabla_\theta f(\xi_{i,r}(\theta) \mid \theta)}{f(\xi_{i,r}(\theta) \mid \theta)}, \quad j=1,\ldots, d_s,
\end{equation}
where $n_0$ is an initial sample size, $f(\cdot | \theta)$ is the probability density (or the probability mass function) of $X$ under parameter $\theta$, and $\xi(\theta)$ denotes simulation sample generated from $P_\theta$. Then $\widehat{\nabla \delta}_{ij}(\theta) =\widehat{\nabla \mu}_{i}(\theta)  -\widehat{\nabla \mu}_{j}(\theta)   $Other methods for estimating the gradient can be found in \cite{fu2015handbook}. 

The impact of estimation errors on the procedures is discussed as follows. To begin with, underestimating $\nu_s, \sigma_{ij}$ or $\bar{L}_{ij}$ can cause premature stopping and undershooting the target PCS since the resulting confidence bands may fail to achieve desired coverage. However, as we shall see in Section \ref{sec:numerical}, this risk is mitigated by the conservatism of Bonferroni inequality. On the other hand, overestimating the parameters can lead to longer runtime and overshooting the target PCS, and thus we do not recommend estimators that are typically positively biased (e.g., $\sup_{\theta\in\Theta} \|\nabla\hat{\delta}_{ij}(\theta) \|_\infty$).

\section{Boosting Through $\eta$: More Analysis about Impact of Drop Rate} \label{ec:opt-eta}

 To minimize the impact caused by IU and SU, the analysis becomes difficult since we want to minimize a matrix-valued function in some sense. However, since our goal is to reduce the running time, which is often dominated by the difference of expected performance between the best and the second best design, we can then minimize the variance $\sigma^2_{bb'}$ to boost the procedure. Here $b$ and $b'$ denote the true best and the second best design, respectively. For this purpose, let $\sigma^2_{ij,\infty}$ denote the asymptotic variance of $\hat{\delta}_{ij,t}$. Then by applying Theorem \ref{thm:joint normality} with only two designs $i$ and $j$, we have 
$$
\sigma^2_{ij, \infty} := \lambda_{I,\eta} \nabla \delta_{ij}(\theta^c)^\intercal \bar{\Sigma}_D \nabla \delta_{ij}(\theta^c) + \lambda_{S,\eta} \bar{m}^{-1} \sigma^2_{ij}(\theta^c).
$$
Then, for any $\eta$ between $(0,1)$, $\sigma^2_{ij,\infty}$, the limiting variance for design $i$ and $j$,  satisfies

\begin{equation*}
\sigma^2_{ij,\infty} \propto \left\{\frac{\kappa}{1-\eta} + \frac{2\eta \ln \eta}{(1-\eta)^2} \right\},
\end{equation*}
where $\propto$ denotes ``is proportional to'', and
\begin{equation*}
\kappa := \frac{2 \sigma^2_{ij, IU} + \sigma^2_{ij, SU}}{2 \sigma^2_{ij, IU} },
\end{equation*}
in which $\sigma^2_{ij, IU}:= \nabla\delta_{ij}(\theta^c)^\intercal \bar{\Sigma}_D \nabla \delta_{ij}(\theta^c)$ and $\sigma^2_{ij, SU}:= \bar{m}^{-1} \sigma^2_{ij}(\theta^c)$. 
Notice that for two specific designs $i$ and $j$, the larger $\sigma^2_{ij,\infty}$, the harder for us to distinguish between them since we will get a larger confidence band $w_{ij,t}$. Hence, if we want elimination between $i$ and $j$ happens earlier, we should minimize $\sigma^2_{ij,\infty}$.  Minimizing $\sigma^2_{ij,\infty}$ is equivalent to solving the following optimization problem,
\begin{equation} \label{eq:min-eta}
\inf_{\eta \in (0,1)} \psi_\kappa(\eta) :=  \frac{\kappa}{1-\eta} + \frac{\eta \ln \eta}{(1-\eta)^2}, \quad \kappa \ge 1, 
\end{equation}
which does not have a closed-form solution. However, the following result characterizes some important properties of the optimal solution.
\begin{proposition} \label{prop:opt-eta}
For any fixed $\kappa > 1$, $\psi_\kappa$ is strictly convex in $(0,1)$ and always has a unique minimizer in $(0,1)$, denoted by $\eta^*(\kappa)$, which satisfies
\begin{equation} \label{eq:limit-eta}
 \eta^*(\kappa) \le \kappa^{-1}, \quad \lim_{\kappa \rightarrow 1} \eta^*(\kappa) = 1.
\end{equation}
Furthermore,
\begin{equation} \label{eq:inf-ratio}
\inf_{\kappa > 1} \frac{\psi_\kappa(\eta^*(\kappa))}{\psi_\kappa(0)} = \frac{1}{2}.
\end{equation}
\end{proposition}

Proposition \ref{prop:opt-eta} guarantees the existence and uniqueness of a minimizer, and the convexity of $\psi_\kappa$ makes it numerically easy to solve (\ref{eq:min-eta}). Moreover, connecting back to the definition of $\kappa$ yields the following interpretation.
\begin{enumerate}
\item[(i)]
If $\sigma^2_{ij,SU} \gg \sigma^2_{ij,IU}$, i.e., SU dominates IU, then $\kappa$ is large and the optimal $\eta$ is near 0. In other words, we should discard only a tiny portion of ``stale'' simulation outputs because averaging more outputs is the most effective way to reduce SU.
\item[(ii)]
If $\sigma^2_{ij,SU} \ll \sigma^2_{ij,IU}$, i.e., IU overshadows SU, then $\kappa \approx 1$ and the optimal $\eta$ would be near 1, meaning that it is more advantageous to discard most of the previous simulation output. Indeed, in the extreme case where there is no SU (i.e., the simulation output is $\mu_i(\hat{\theta}_t)$), there would be no need to keep anything but the latest output.
\end{enumerate}

The lower bound (\ref{eq:inf-ratio}) indicates that compared with blindly setting $\eta = 0$, choosing the optimal $\eta$ can reduce the asymptotic variance of $\hat{\delta}_{ij,t}$ by at most a factor of 2. Again, this happens when there is no SU, i.e., $\kappa=1$, since we have seen in Section \ref{sec:normality} that retaining only the latest output halves the asymptotic variance resulted from averaging all outputs.

\section{Supplement Material for Numerical Experiment} \label{ec:numerical}
\subsection{Quadratic problem} \label{ec:quad-prob}
The first problem is to minimize the expected value of a quadratic function $X_i = (i - \xi)^2$, where the true distribution of $\xi$ is Poisson with mean $\theta^c$. Let $\mathcal{I}$ be all the designs to be evaluated. Also let $\theta^c = 1 \in \mathcal{I}$ so that the best design is $b =\theta^c = 1$.

\subsection{Production-inventory problem} \label{ec:prod-inv-prob}
We also test the procedures on a production-inventory problem, where the objective function does not have a closed form but needs to be evaluated via simulation.

Suppose that we are running a capacitated production system and we want to minimize the expected total cost over a finite number of stages. The decision variable is the order-up-to level, i.e., the quantity we should fill up to once the inventory falls below that level. Please note this is an offline planning problem, where we select the best inventory policy after finishing the simulation.   Meanwhile, the production amount in each stage is capped. At the beginning of a stage, the amount produced in the previous stage arrives. Then, the demand is revealed and we fulfill the total demand (both backlog and current demand) to the best allowed by on-hand inventory, after which unfulfilled demand becomes the new backlog. Decision of the production amount is carried out at the end of the stage.
 
The variables are listed as follows: $i$ is the order-up-to level, $I_s$ is the inventory level at the end of the $s$th stage, $\xi_s$ is the demand at the $s$th stage, and $R_s$ is the production amount at the $s$th stage. Let $I_0 = i$ and $R_0 = 0$. Starting from $s=1$, the system dynamics evolve according to the following equations,
\begin{align*}
& I_{s} = I_{s-1} + R_{s-1} - \xi_s,\\
& R_s = \min\{R^*, (i - I_{s})^{+} \},
\end{align*}
where $a^+ := \max\{0, a\}$ and $R^*$ is the maximum production amount. Assume that the demand quantities are independent random variables, where each $\xi_s$ follows an exponential distribution with mean $\theta^c_s$. Let $c_H$ be the holding cost per unit and $c_B$ be the backlog cost per unit. Then, the cost at the $s$th stage is 
\begin{equation*}
c_s := c_H (R_{s-1} + I^+_s) + c_B I_s^-,
\end{equation*}
where $a^- := -\min\{a, 0\}$. The expected total cost over $S$ stages is 
\begin{equation*}
\mu_i(\theta^c) = \E\left(\sum_{s=1}^S c_s \right).
\end{equation*}
The goal is to select the optimal order-up-to quantity among candidate set $\mathcal{I}$.

\subsection{Experiment result for optimizing $\eta$} \label{ec:opt-eta-experiment}
For generality, we use random batch sizes in both the quadratic example in Section \ref{sec:IU control} and inventory-production example in Secton \ref{sec:Inv-Pro example}. We optimize $\eta$ to minimize the termination stage. Table \ref{tab:procedure-opt-eta} summarizes the average termination stage and the corresponding empirical PCS along with their 95\% confidence intervals, based on 100 independent runs of each procedure.

\begin{table}[ht]
\caption{Performance of SEIU-MCB: $\eta=0.1$ vs. optimized $\eta$.}
\begin{center}
\begin{tabular}{|c||cc|cc||cc|cc||}
\hline\hline
& \multicolumn{4}{c||}{Quadratic example } & \multicolumn{4}{c||}{Production-inventory example} \\
   \hline $\bar{k}$
   & \multicolumn{2}{c|}{$\eta=0.1$} & \multicolumn{2}{c||}{Optimized $\eta$} &\multicolumn{2}{c|}{$\eta=0.1$} & \multicolumn{2}{c||}{Optimized $\eta$}  \\
    \hline\hline
    \multirow{2}{*}{10} & \multicolumn{2}{c|}{ $94\% \pm 3\%$}& \multicolumn{2}{c||}{ $91\% \pm 4\%$}& \multicolumn{2}{c|}{ $93\% \pm 3\%$} & \multicolumn{2}{c||}{ $89\% \pm 4\%$}\\ & \multicolumn{2}{c|}{ $47 \pm 6.4$}& \multicolumn{2}{c||}{ $35 \pm 4.8$}& \multicolumn{2}{c|}{ $55 \pm 4.7$} & \multicolumn{2}{c||}{ $37 \pm 3.8$}\\
    \hline \hline
    \multirow{2}{*}{20} & \multicolumn{2}{c|}{ $95\% \pm 4\%$}& \multicolumn{2}{c||}{$ 92\% \pm 6\%$}& \multicolumn{2}{c|}{ $96\% \pm 3\%$} & \multicolumn{2}{c||}{ $94\% \pm 3\%$}\\ & \multicolumn{2}{c|}{ $25 \pm 2.7$}& \multicolumn{2}{c||}{ $20 \pm 3.7$}& \multicolumn{2}{c|}{ $28 \pm 2.4$} & \multicolumn{2}{c||}{ $22 \pm 2.1$}\\
    \hline \hline
     \multirow{2}{*}{30} & \multicolumn{2}{c|}{ $98\% \pm 3\%$}& \multicolumn{2}{c||}{ $96\% \pm 4\%$}& \multicolumn{2}{c|}{ $98\% \pm 1\%$} & \multicolumn{2}{c||}{ $98\% \pm 1\%$}\\ & \multicolumn{2}{c|}{ $20 \pm 2.6$}& \multicolumn{2}{c||}{ $16 \pm 2.5$}& \multicolumn{2}{c|}{ $21 \pm 1.5$}& \multicolumn{2}{c||}{ $17 \pm 1.3$}\\
    \hline \hline
\end{tabular}
\end{center}
\label{tab:procedure-opt-eta}
\end{table}

From Table \ref{tab:procedure-opt-eta}, we can conclude that the drop rate $\eta$ balances the trade off between PCS and run time. In almost all settings, the optimal drop rate results in less run time of the procedure while achieving a relatively lower PCS. This is consistent with our intuition that less total amount of data (corresponding to earlier termination stage) increases input uncertainty and hence results in a lower PCS. It is worth noting that although the optimal drop rate results in a lower empirical PCS, it still achieves the target PCS. From a practical viewpoint, a good selection of the drop rate will boost the efficiency by reducing the run time while still satisfying the probabilistic guarantee. \\

\section{Proofs for SEIU}
\textbf{Proof of Proposition \ref{prop:cb}}
Suppress $i,j$ from the index for convenience. Letting
\begin{align*}
&\bar{\delta}_t := \E[\hat{\delta}_{t} \mid \hat{\theta}_1, \ldots, \hat{\theta}_t] =  [M(t) - M(t_\eta)]^{-1}\sum_{\ell=t_\eta+1}^t m(\ell) \delta(\hat{\theta}_\ell),\\
&Y_r= Y_{ij,r} := X_{i,r} - X_{j,r}, 
\end{align*}
we have
\begin{equation} \label{eq:ub}
\mathbb{P}\left\{|\hat{\delta}_{t} -\delta(\theta^c)| > x+y \right\} \le \mathbb{P}\left\{|\hat{\delta}_{t} -\bar{\delta}_t| > x \right\} + \mathbb{P}\left\{|\bar{\delta}_{t} -\delta(\theta^c)| > y \right\}
\end{equation}
For the first term on the RHS of (\ref{eq:ub}), 
\begin{align*}
\mathbb{P}\left\{|\hat{\delta}_{t} -\bar{\delta}_t| > x \right\} 
&= \E\left[ \mathbb{P}\left\{ \bigg| [M(t) - M(t_\eta)]^{-1} \sum_{\ell=t_\eta+1}^t\sum_{r=1}^{m(\ell)} [Y_r(\hat{\theta}_\ell) - \delta(\hat{\theta}_\ell)] \bigg| > x \,\middle\vert \,  \hat{\theta}_1, \ldots, \hat{\theta}_t \right\} \right] \\
&\le 2 \exp\left\{- \frac{[M(t) - M(t_\eta)]x^2}{2 {\bar{\sigma}}^2} \right\},
\end{align*}
where the inequality holds since conditioned on $\hat{\theta}_1, \ldots, \hat{\theta}_t$, $\{Y_r(\hat{\theta}_\ell) - \delta(\hat{\theta}_\ell)\}_r$ are independent zero-mean $subG(\bar{\sigma}^2)$ random variables. For the second term on the RHS of (\ref{eq:ub}),
\begin{align*}
\mathbb{P}\left\{|\bar{\delta}_{t} -\delta(\theta^c)| > y \right\} &\le \mathbb{P}\left\{ \bigg| [M(t) - M(t_\eta)]^{-1} \sum_{\ell=t_\eta+1}^t m(\ell) [\delta(\hat{\theta}_\ell) - \delta(\theta^c)] \bigg| > y  \right\} \\
& \le \mathbb{P}\left\{\sum_{\ell=t_\eta+1}^t m(\ell) |\delta(\hat{\theta}_\ell) - \delta(\theta^c)| > [M(t) - M(t_\eta)] y \right\} \\
& \le \mathbb{P}\left\{\sum_{\ell=t_\eta+1}^t m(\ell) \|\hat{\theta}_\ell - \theta^c\|_1 > [M(t) - M(t_\eta)]\frac{y}{\bar{L}} \right\} \\
& =  \mathbb{P}\left\{\sum_{\ell=t_\eta+1}^t m(\ell)  \sum_{s=1}^S\sum_{j=1}^{d_s} |\hat{\theta}_{s,j,\ell} - \theta^c_{s,j}| > [M(t) - M(t_\eta)] \frac{y}{\bar{L}} \right\} \\
& \le \sum_{s=1}^S\sum_{j=1}^{d_s} \mathbb{P}\left\{\sum_{\ell=t_\eta+1}^t m(\ell) |\hat{\theta}_{s,j,\ell} - \theta^c_{s,j}| > [M(t) - M(t_\eta)] \frac{y}{d\bar{L}} \right\} \\
&:= \sum_{s=1}^S\sum_{j=1}^{d_s} p_{s,j},
\end{align*}
where $\hat{\theta}_{s,j,\ell}$ denotes the $j$th coordinate of $\hat{\theta}_{s,\ell}$. Note that for any nonnegative sequence $\{\omega_{s,\ell}\}_{\ell=t_\eta+1}^t$ that satisfies $\sum_{\ell=t_\eta+1}^t \omega_{s,\ell} = 1$, we have
\begin{align}
p_{s,j} &\le \mathbb{P} \left\{ \bigcup_{\ell = t_\eta+1}^t \left\{m(\ell) |\hat{\theta}_{s,j,\ell} - \theta^c_{s,j}| > \omega_{s,\ell}  [M(t) - M(t_\eta)] \frac{y}{d\bar{L}}  \right\}  \right\} \notag \\
& \le \sum_{\ell=t_\eta+1}^t \mathbb{P} \left\{m(\ell) |\hat{\theta}_{s,j,\ell} - \theta^c_{s,j}| > \omega_{s,\ell}  [M(t) - M(t_\eta)] \frac{y}{d\bar{L}} \right\} \\
&\le \sum_{\ell=t_\eta+1}^t 2\exp\left\{- \frac{N_s(\ell) \omega^2_{\ell} [M(t) - M(t_\eta)]^2 y^2}{2d^2\bar{L}^2 \nu_s^2 [m(\ell)]^2} \right\}, \notag
\end{align}
where the second inequality follows from the fact that $\hat{\theta}_{s,j,\ell} \sim subG\left(\nu_s^2 / N_s(\ell)\right)$. Take 
\begin{equation*}
\omega_{s,\ell} = \frac{m(\ell) / \sqrt{N_s(\ell)}}{\gamma_{s,\eta}(t)}
\end{equation*}
and we further have
\begin{equation} \label{eq:p_sj}
p_{s,j} \le 2(t-t_\eta) \exp\left\{- \frac{[M(t) - M(t_\eta)]^2 y^2}{2d^2 \bar{L}^2 \nu_s^2 \gamma_{s,\eta}^2(t)} \right\}.
\end{equation}
Plugging (\ref{eq:p_sj}) back into (\ref{eq:ub}) completes the proof. \hfill  $\blacksquare$

\textbf{Proof of Lemma {\ref{lemma:cb}}}
Similar to the proof of Proposition \ref{prop:cb}, let
\begin{equation*}
\bar{\delta}_{i,t} := \E[\hat{\delta}_{i,t} \mid \hat{\theta}_1, \ldots, \hat{\theta}_t] = [M_i(t) - M_i(t_\eta)]^{-1}\sum_{\ell=t_\eta+1}^t m_i(\ell) \delta_i(\hat{\theta}_\ell)
\end{equation*}
and we have 
\begin{align*}
\mathbb{P} \left\{\bigcup_{ i<j } \left\{|\hat{\delta}_{ij,t} - \delta_{ij}(\theta^c)| > x_{ij} + y_{ij} \right\}  \right\}
\le \sum_{ i<j } \mathbb{P}\left\{|\hat{\delta}_{i,t} - \bar{\delta}_{i,t}| >x_{ij} \right \} +
\mathbb{P} \left\{\cup_{ i<j } \mathcal{E}_{ij,t} \right \},
\end{align*}
where $\mathcal{E}_{ij,t}:= \left\{|\bar{\delta}_{ij,t} - \delta_{ij}(\theta^c)| > y_{ij} \right\}$. By the same conditioning argument in the proof of Proposition \ref{prop:cb} and a sub-Gaussian tail bound, we have
\begin{align} \label{eq:su-cb}
\sum_{  i<j } \mathbb{P}\left\{|\hat{\delta}_{ij,t} - \bar{\delta}_{ij,t}| >x_{ij} \right   \} \le \sum_{  i<j } 2 \exp\left\{- \frac{[M(t) - M(t_\eta)]x_{ij}^2}{2  \bar{\sigma_{ij}}^2} \right\},
\end{align}
which gives the first part of the desired bound. For $\mathcal{E}_{ij,t}$, in light of the Lipschitz continuity of $\delta_{ij}$,
\begin{align*}
\mathcal{E}_{ij,t} 
&\subseteq \left\{\sum_{\ell=t_\eta+1}^t m(\ell) \|\hat{\theta}_\ell - \theta^c\|_1 > [M(t) - M(t_\eta)]\frac{y_{ij}}{\bar{L}_{ij}}  \right\} \\
& = \left\{ \sum_{j=1}^{d} \sum_{\ell=t_\eta+1}^t m(\ell) |\hat{\theta}_{s,j,\ell} - \theta^c_{s,j}| > [M(t) - M(t_\eta)]\frac{y_{ij}}{\bar{L}_{ij}} \right\} \\
& \subseteq \bigcup_{s=1}^S\bigcup_{j=1}^{d_s} \left\{ \sum_{\ell=t_\eta+1}^t m(\ell) |\hat{\theta}_{s,j,\ell} - \theta^c_{s,j}| > [M(t) - M(t_\eta)]\frac{y_{ij}}{d\bar{L}_{ij}} \right\}  \\
& \subseteq \bigcup_{s=1}^S\bigcup_{j=1}^{d_s} \bigcup_{\ell=t_\eta+1}^t \left\{|\hat{\theta}_{s,j,\ell} - \theta^c_{s,j}| > \frac{\omega_{s,\ell} [M(t) - M(t_\eta)] y_{ij}}{m(\ell) d \bar{L}_{ij}} \right\},
\end{align*}
where $\omega_{s,\ell} = m(\ell)/ \left[\sqrt{N_s(\ell)} \gamma_{s,\eta}(t)\right]$ satisfies $\sum_{\ell = t_\eta+1}^t \omega_{s,\ell}=1$. Plugging in the value of $\omega_{s,\ell}$ and we further have
\begin{align*}
\cup_{  i<j } \mathcal{E}_{ij,t} \subseteq 
  \bigcup_{s=1}^S\bigcup_{j=1}^{d_s} \bigcup_{\ell=t_\eta+1}^t \left\{|\hat{\theta}_{s,j,\ell} - \theta^c_{s,j}| > \min_{  i<j } \left\{\frac{[M(t) - M(t_\eta)] y_{ij}}{d\bar{L}_{ij}\sqrt{N_s(\ell)} \gamma_{ \eta}(t)} \right\} \right\},
\end{align*}
and it follows from the fact $\hat{\theta}_{s,j,\ell}\sim subG(\bar{\nu}^2 / N_s(\ell))$ that
\begin{align} \label{eq:iu-cb}
\mathbb{P}(\cup_{  i<j } \mathcal{E}_{ij,t}    ) \le 2(t-t_\eta)  \sum_{s=1}^S d_s \max_{  i<j } \left\{ \exp\left\{- \frac{[M(t) - M(t_\eta)]^2 y_{ij}^2}{2 \nu_s^2 d^2 \bar{L}^2_{ij} \gamma^2_{s,\eta}(t)} \right\}\right\}.
\end{align}
Finally, combining (\ref{eq:su-cb}) and (\ref{eq:iu-cb}) yields the result. \hfill  $\blacksquare$

\textbf{{Proof of Theorem \ref{thm:SEIU-guarantee}}}
According to (\ref{eq:SE}), it suffices to show that

\begin{equation} \label{eq:cb-validity}
 \mathbb{P}\left\{\bigcup_{  i<j } \left\{|\hat{\delta}_{ij,t} - \delta_{ij}(\theta^c)| > w_{ij,t}\right\}     \right\}  \le \dfrac{6\alpha}{\pi^2t^2},
\end{equation}

which would imply that the confidence bands $\{w_{ij,t}\}$ jointly achieve at least $1-\alpha$ coverage probability. Apply Lemma \ref{lemma:cb} to obtain

\begin{align}
&  \mathbb{P}\left\{\bigcup_{  i<j } \left\{|\hat{\delta}_{ij,t} - \delta_{ij}(\theta^c)| > w_{ij,t}\right\}    \right\} 
=  \mathbb{P}\left\{\bigcup_{  i<j } \left\{|\hat{\delta}_{ij,t} - \mu_{ij}(\theta^c)| > u_{ij,t} + v_{ij,t}\right\}    \right\}  \notag \\
\le& \sum_{  i<j } 2 \exp\left\{- \frac{[M(t) - M(t_\eta)]u_{ij,t}^2}{2  \bar{\sigma_{ij}}^2} \right\} \tag{$\dagger$} \\
 & +  2(t-t_\eta)   d \max_{  i<j } \left\{\exp\left\{- \frac{[M(t) - M(t_\eta)]^2 v_{ij,t}^2}{2 \nu_s^2 d^2 \bar{L}^2_{ij} \gamma^2_{s,\eta}(t)} \right\}\right\} \tag{$\dagger\dagger$} 
\end{align}
We check $(\dagger)$ and $(\dagger\dagger)$ as follows.
\begin{align*}
(\dagger) &= \sum_{  i<j } 2 \exp \left\{-\frac{M(t) - M(t_\eta)}{2\bar{\sigma}_{ij}^2} \cdot
\frac{4 \bar{\sigma}_{ij}^2 \ln{\left(\sqrt{\frac{  K (  K -1)\pi^2}{3\alpha}}t\right)}}{M(t) - M(t_\eta)}\right\} \\
&= \sum_{  i<j} 2\exp\left\{-\ln\left(\frac{  K(  K -1)\pi^2 t^2}{3\alpha} \right) \right\} \\
&=    K ( K-1)\cdot \frac{3\alpha}{   K( K-1)\pi^2 t^2}  = \frac{3\alpha}{\pi^2t^2}.
\end{align*}
For ($\dagger\dagger$), first note that
\begin{align*}
\exp\left\{- \frac{[M(t) - M(t_\eta)]^2 v_{ij,t}^2}{2 \nu_s^2 d^2 \bar{L}^2_{ij} \gamma^2_{s,\eta}(t)} \right\} &\le \exp\left\{- \ln\left(\frac{2 S d_s\pi^2 (1-\eta)t^3}{3\alpha} \right) \right\} = \frac{3\alpha}{2 Sd_s  \pi^2(1-\eta)t^3},
\end{align*}
and it follows that
\begin{align*}
(\dagger\dagger) &= 2(t-t_\eta) \sum_{s=1}^S d_s \frac{3\alpha}{2 Sd_s  \pi^2(1-\eta)t^3} \\
&=   \frac{3\alpha}{\pi^2 t^2  } .
\end{align*}
Combining $(\dagger)$ and $(\dagger\dagger)$ gives (\ref{eq:cb-validity}).
It remains to show the upper bound on $\E[\tilde{\tau}]$. We claim that if all $\delta_{ij}(\theta^c)$ fall within their corresponding confidence bands $[\hat{\delta}_{ij,t} - w_{ij,t}, \hat{\delta}_{ij,t} + w_{ij,t}] $, then any suboptimal design $i \neq b$ must be eliminated prior to stage $\min_{j\neq i} \tilde{\tau}_{ij}$. To see this, note that $\tilde{\tau}_{ij}$ is how long design $i$ can last before getting eliminated by design $j$. Thus, $\min_{j \neq i} \tilde{\tau}_{ij}$ is the longest survival time of design $i$. Furthermore, the procedure must terminate before stage $\tilde{\tau}_* := \max_{i \neq b} \min_{j \neq i} \tilde{\tau}_{ij}$. In other words, if the procedure has not terminated by some stage $t > \tilde{\tau}_*$, then at least one design's confidence band has failed to cover its true mean at stage $t$, i.e., 
\begin{equation*}
\{\tilde{\tau} > t\} \subseteq \bigcup_{\substack{i<j}} \left\{|\hat{\delta}_{i,t} - \delta_{ij}(\theta^c)| > w_{ij,t} \right\}, \quad \forall t > \tilde{\tau}_*.
\end{equation*}
Let $\mathcal{E}_t :=\bigcup\limits_{\substack{i<j}} \left\{|\hat{\delta}_{ij,t} - \delta_{ij}(\theta^c)| > w_{ij,t} \right\}$,

\begin{align*}
\E[\tilde{\tau}] & = \sum_{t=0}^\infty \mathbb{P}(\tilde{\tau} > t) = \sum_{t=0}^{\tilde{\tau}_* - 1} \mathbb{P}(\tilde{\tau} > t) + \sum_{t = \tilde{\tau}_*}^\infty \mathbb{P}(\tilde{\tau} > t)  \\
&\le \tilde{\tau}_* + \sum_{t=\tilde{\tau}_*}^\infty \mathbb{P}(\mathcal{E}_t) \le \tilde{\tau}_* + \sum_{t=1}^\infty \mathbb{P}(\mathcal{E}_t) \le \tilde{\tau}_* + \alpha,
\end{align*}
where the last inequality follows from (\ref{eq:cb-validity}). \hfill $\blacksquare$

\textbf{Proof of Corollary \ref{co:SEIU}}

According to our SE framework in Section \ref{sec:SE}, we know that with probability as least $1-\alpha$, the confidence bands achieve perfect coverage. Hence at any stage $t$, the best design $b$ remains in the remaining set $\mathcal{S}$.

Now, with probability at least $1-\alpha$, we have 
\begin{align*}
    \mu_{i^*} (\theta^c)- \mu_b(\theta^c) \ge \hat{\delta}_{i^* b, t} - w_{i^*b,t} \ge - w_{i^*b,t} \ge - \max_{j \in \mathcal{S}, j\neq i*}w_{i*j,t}.
\end{align*}
Hence the current design with the largest sample mean is  $\epsilon_t$ optimal, where $\epsilon_t = \max_{j \in \mathcal{S}, j\neq i*}w_{i*j,t}$.

\textbf{Proof of Corollary \ref{co:IZ}}
The validity of the SEIU(IZ) is the result of Theorem \ref{thm:SEIU-guarantee} and Corollary \ref{co:SEIU}. For the termination time, notice that for $t>\tau$ and any $j \in \mathcal{S}, j\neq i^*$
\begin{align*}
    w_{i*j,t} \le \max_{  i\neq j } {w}_{ij,t} < \epsilon
\end{align*}
Hence, $ \max\limits_{j \in \mathcal{S}, j\neq i^*} w_{i^*j,t} < \epsilon$. The procedure must terminate before $\tau$.

\section{Proofs for SEIU-MCB}
We establish a series of Lemmas for proving Theorem \ref{thm:joint normality}.
\begin{lemma}[The Lindeberg-Feller Theorem ( Proposition 2.2.7 in \cite{van2000asymptotic}] \label{lemma:Lindberg}
For each $n$ let $Y_{n, 1}, \ldots$, $Y_{n, k_{n}}$ be independent random vectors with finite variances such that
\begin{enumerate}
    \item [(i)] $ \sum_{i=1}^{k_{n}} \operatorname{Cov} Y_{n, i}  \rightarrow \Sigma,$
    \item [(ii)] $\sum_{i=1}^{k_{n}} \mathrm{E}\left\|Y_{n, i}\right\|^{2} 1\left\{\left\|Y_{n, i}\right\|>\varepsilon\right\}  \rightarrow 0, \quad \text { every } \varepsilon>0.$
\end{enumerate}
Then the sequence $\sum_{i=1}^{k_{n}}\left(Y_{n, i}-\mathrm{E} Y_{n, i}\right)$ converges in distribution to a normal $N_s(0, \Sigma)$ distribution.
\end{lemma}

\begin{lemma} \label{lemma:CLT}
Let $\{X_n\}$ be independent random vectors with $\E X_n = 0$. If $X_n \Rightarrow X$ and $\operatorname{Cov} X_n \rightarrow \operatorname{Cov} X$ as $n \rightarrow \infty$, then for any $\eta \in [0,1)$,
$$(n-n_\eta)^{-1/2}\sum_{i=n_\eta+1}^n X_i \Rightarrow \mathcal{N}(0, \operatorname{Cov} X) \quad \text{ as } n\rightarrow \infty.$$
\end{lemma}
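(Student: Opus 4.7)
\textbf{Proof proposal for Lemma \ref{lemma:CLT}.} The plan is to apply the Lindeberg--Feller Theorem (Lemma \ref{lemma:Lindberg}) to the triangular array defined by
$$Y_{n,i} := \frac{X_i}{\sqrt{n - n_\eta}}, \qquad n_\eta + 1 \le i \le n,$$
(and $Y_{n,i} := 0$ for $i \le n_\eta$, say). Within each row these are independent with zero mean, so only the two numerical hypotheses of Lindeberg--Feller need to be verified.

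For hypothesis (i), note that
$$\sum_{i=n_\eta+1}^n \E Y_{n,i}^2 \; = \; \frac{1}{n - n_\eta}\sum_{i=n_\eta+1}^n \E X_i^2.$$
Since $\E X_n^2 \to \E X^2$, for any $\varepsilon > 0$ there is some $N$ beyond which $|\E X_i^2 - \E X^2| < \varepsilon$. Because $n_\eta = \lfloor \eta n\rfloor \to \infty$ as $n \to \infty$, eventually $n_\eta + 1 \ge N$, and then the average above is within $\varepsilon$ of $\E X^2$. Thus the row sums of variances converge to $\sigma^2 := \E X^2$, which I will assume is strictly positive (the degenerate case $\E X^2 = 0$ is trivial since $X_n \Rightarrow 0$ in $L^2$ then forces the partial sums to vanish in probability).

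The main obstacle is the Lindeberg condition (ii), which here reads
$$L_n(\varepsilon) \; := \; \frac{1}{n - n_\eta}\sum_{i=n_\eta+1}^n \E\!\left[X_i^2\,\mathbbm{1}_{\{|X_i| > \varepsilon\sqrt{n - n_\eta}\}}\right] \xrightarrow{n \to \infty} 0.$$
The key observation is that the family $\{X_n^2\}$ is uniformly integrable. Indeed, by the continuous mapping theorem, $X_n \Rightarrow X$ implies $X_n^2 \Rightarrow X^2$; combined with $\E X_n^2 \to \E X^2 < \infty$ and nonnegativity, a standard result (convergence of means plus weak convergence for nonnegative r.v.s) yields uniform integrability of $\{X_n^2\}$. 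Hence for every $\varepsilon' > 0$ there exists $M > 0$ with $\sup_n \E[X_n^2 \mathbbm{1}_{\{X_n^2 > M\}}] < \varepsilon'$. Now for $n$ large enough that $\varepsilon^2(n - n_\eta) > M$, every summand in $L_n(\varepsilon)$ is bounded by $\E[X_i^2 \mathbbm{1}_{\{X_i^2 > M\}}] < \varepsilon'$, and therefore $L_n(\varepsilon) < \varepsilon'$. Since $\varepsilon'$ was arbitrary, $L_n(\varepsilon) \to 0$.

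With both hypotheses verified, Lemma \ref{lemma:Lindberg} yields
$$\sum_{i=n_\eta+1}^n Y_{n,i} \; = \; \frac{1}{\sqrt{n - n_\eta}}\sum_{i=n_\eta+1}^n X_i \;\Rightarrow\; \mathcal{N}(0, \E X^2),$$
which is the desired conclusion. The only non-routine step is the uniform integrability argument for $\{X_n^2\}$; everything else is bookkeeping, using that $n_\eta \to \infty$ and $n - n_\eta \to \infty$ together as $n \to \infty$ for fixed $\eta \in (0,1)$.
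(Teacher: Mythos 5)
Your proposal is correct and follows essentially the same route as the paper: both apply the Lindeberg--Feller theorem (Lemma \ref{lemma:Lindberg}) to the array $Y_{n,i} = X_i/\sqrt{n-n_\eta}$ for $i > n_\eta$ (and $Y_{n,i}=0$ otherwise), and verify the variance condition in the same way. The only difference is in packaging the Lindeberg condition: you deduce uniform integrability of $\{X_n^2\}$ from $X_n^2 \Rightarrow X^2$ and $\E X_n^2 \to \E X^2$, whereas the paper passes to a Skorokhod representation and invokes a generalized dominated convergence theorem --- two standard forms of the same fact --- and your explicit treatment of the degenerate case $\E X^2 = 0$ is a small addition the paper omits.
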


\textbf{Proof of Lemma \ref{lemma:CLT}.}
Let $Y_{n,i} := 0$ if $i \le n_\eta$ and $X_i / \sqrt{n-n_\eta}$ otherwise. We apply Lemma \ref{lemma:Lindberg} to $Y_{n,i}$. Condition (i) is satisfied since 
$$\sum_{i=1}^n \operatorname{Cov} Y_{n,i} = \sum_{i=n_\eta+1}^n \operatorname{Cov} X_i / (n-n_\eta) \rightarrow \operatorname{Cov} X.$$ 
In addition, by a generalized Dominated Convergence Theorem (note that the integrand is dominated by $||X_n||^2$ and $\operatorname{Cov} X_n \rightarrow \operatorname{Cov} X$),
$$\E\left[||X_n||^2 \mathbbm{1}_{\{||X_n|| > \epsilon \sqrt{n-n_\eta}\}}\right] \rightarrow 0, \quad \text{as } n \rightarrow \infty$$
Thus, as $n \rightarrow \infty$,
\begin{align*}
\sum_{i=1}^n \E[||Y_{n,i}||^2 \mathbbm{1}_{\{||Y_{n,i}||>\epsilon\}}] &= \frac{1}{n-n_\eta} \sum_{i=n_\eta+1}^n\E[||X_i||^2 \mathbbm{1}_{\{||X_i|| > \epsilon \sqrt{n-n_\eta}\}}] \rightarrow 0,
\end{align*}
which verifies condition (ii), and the result follows. \hfill $\blacksquare$

The next few lemmas revolve around the convergence properties of
\begin{equation} \label{eq:phi}
\phi_{s,t}(k) := \sum_{\ell=k}^t m(\ell) / N_s(\ell), \quad  k\le t,
\end{equation}
which plays a crucial role in the proof of Theorem \ref{thm:joint normality}.

\begin{lemma}\label{lemma:conv-gamma}
Under Assumption \ref{assump:CLT}, $\phi_{t}(t_\eta) \rightarrow -\bar{m} \ln \eta / \bar{n}$ as $t \rightarrow \infty$.
\end{lemma}
\textbf{Proof of Lemma \ref{lemma:conv-gamma}}
 We drop the index $s$ for simplicity. Note that
\begin{align*}
\left| \phi_t(t_\eta) - \sum_{\ell=t_\eta}^t \frac{\bar{m}}{\bar{n} \ell} \right| \le \left| \phi_t(t_\eta) - \sum_{\ell=t_\eta}^t \frac{\bar{m}}{N(\ell)} \right| + \left| \sum_{\ell=t_\eta}^t \frac{\bar{m}}{N(\ell)} - \sum_{\ell=t_\eta}^t \frac{\bar{m}}{\bar{n} \ell} \right|.
\end{align*}
For the first term in the RHS, 
\begin{align*}
\left| \phi_t(t_\eta) - \sum_{\ell=t_\eta}^t \frac{\bar{m}}{N_s(\ell)} \right| &= \left| \sum_{\ell=t_\eta}^t \frac{m(\ell) -\bar{m}}{N(\ell)} \right| \le \frac{1}{N(t_\eta)} \left| \sum_{\ell=t_\eta}^t m(\ell) - (t-t_\eta+1) \bar{m} \right| \\
&= \frac{t-t_\eta+1}{N(t_\eta)} \frac{1}{t-t_\eta+1} \left| \sum_{\ell=t_\eta}^t m(\ell) - (t-t_\eta+1) \bar{m} \right| \rightarrow 0, \quad \text{as } t \rightarrow \infty.
\end{align*}
For the second term,
\begin{align*}
\left| \sum_{\ell=t_\eta}^t \frac{\bar{m}}{N(\ell)} - \sum_{\ell=t_\eta}^t \frac{\bar{m}}{\bar{n}\ell} \right| =
\frac{\bar{m}}{\bar{n}} \left| \sum_{\ell=t_\eta}^t \frac{\bar{n}\ell - N(\ell)}{N(\ell)\ell} \right| \le \frac{\bar{m}}{\bar{n}} \frac{1}{N(t_\eta)} \left| \sum_{\ell=t_\eta}^t \left( \bar{n} - \frac{N(\ell)}{\ell} \right) \right| \rightarrow 0, \quad \text{as } t \rightarrow \infty.
\end{align*}
Therefore, we have 
\begin{equation*}
\lim_{t\rightarrow \infty} \phi_t(t_\eta) = \frac{\bar{m}}{\bar{n}} \lim_{t\rightarrow \infty} \sum_{\ell=t_\eta}^t \frac{1}{\ell} = \frac{\bar{m}}{\bar{n}} \lim_{t\rightarrow \infty}  \int_{t_\eta}^t \frac{1}{x} dx = -\frac{\bar{m} \ln\eta}{
\bar{n}},
\end{equation*} 
which shows the desired limit. \hfill $\blacksquare$

\begin{lemma} \label{lemma:gamma}
For $k \le t$,
\begin{equation*}
\sum_{\ell=1}^k \phi_t^2(\ell) n(\ell) = \sum_{\ell=1}^{k-1} \frac{m^2(\ell)}{N(\ell)} + 2 \sum_{\ell=1}^{k-2} \frac{M(\ell)m(\ell+1)}{N(\ell+1)} + 2M(k-1) \phi_t(k) + N(k) \phi_t^2(k).
\end{equation*}
\end{lemma}

\textbf{Proof of Lemma \ref{lemma:gamma}}
By expanding $\phi_t(\ell)$ into $m(\ell)/N(\ell) + \phi_t(\ell+1)$ recursively, we have
\begin{align*}
\sum_{\ell=1}^k \phi_t^2(\ell) n(\ell) &= \phi_t^2(1) n(1) + \sum_{\ell=2}^k \phi_t^2(\ell) n(\ell) \\
&= n(1) \left[\frac{m(1)}{N(1)} + \phi_t(2) \right]^2 + \sum_{\ell=2}^k \phi_t^2(\ell) n(\ell) \\
&= \frac{m^2(1)}{N(1)} + 2m(1) \phi_t(2) + N(2) \phi_t^2(2) + \sum_{\ell=3}^k \phi_t^2(\ell) n(\ell) \\
&= \frac{m^2(1)}{N(1)} + 2m(1) \phi_t(2) + N(2) \left[\frac{m(2)}{N(2)} + \phi_t(3) \right]^2 + \sum_{\ell=3}^k \phi_t^2(\ell) n(\ell) \\
&= \cdots \\
&= \sum_{\ell=1}^{k-1} \frac{m^2(\ell)}{N(\ell)} + 2 \sum_{\ell=1}^{k-1} m(\ell) \phi_t(\ell+1) + N(k) \phi_t^2(k). \tag{*}
\end{align*}
Similarly, expanding the second term in (*) gives
\begin{align*}
\sum_{\ell=1}^{k-1} m(\ell) \phi_t(\ell+1) &= 
m(1) \left[\frac{m(2)}{N(2)} + \phi_t(3)\right] + \sum_{\ell=2}^{k-1} m(\ell)\phi_t(\ell+1)\\
&= \frac{m(1)m(2)}{N(2)} + M(2)\phi_t(3) + \sum_{\ell=3}^{k-1} m(\ell)\phi_t(\ell+1)\\
&= \frac{M(1)m(2)}{N(2)} + \frac{M(2)m(3)}{N(3)}  + M(3)\phi_t(4) + \sum_{\ell=4}^{k-1} m(\ell)\phi_t(\ell+1)\\
&=\cdots\\
&=\sum_{\ell=1}^{k-2} \frac{M(\ell)m(\ell+1)}{N(\ell+1)} + M(\ell-1) \phi_t(k). \tag{**}
\end{align*}
Plugging (**) into (*) yields the final result. \hfill $\blacksquare$

\begin{lemma} \label{lemma:beta}
Let Assumption \ref{assump:CLT} hold. Then,
\begin{equation} \label{conv-cond}
\left[M(t) - M(t_\eta)\right]^{-1} [\phi^2_{ t} (t_\eta+1) N_s(t_\eta) + \sum_{\ell=t_\eta+1}^t \phi^2_{t}(\ell)n_s(\ell)] \rightarrow \beta_{s,\eta}, \quad \text{as } t \rightarrow \infty,
\end{equation}
where
\begin{equation} \label{eq:beta}
\beta_{s,\eta} = \frac{\bar{m}}{\bar{n}_s}\left(2 + \frac{2\eta \ln\eta}{1-\eta} \right).
\end{equation}
\end{lemma}

\textbf{Proof of Lemma \ref{lemma:beta}}
Fix $s$ so that it can be suppressed from the indices. Define
\begin{equation}
\tilde{\beta}_\eta(t) := \left[M(t) - M(t_\eta)\right]^{-1} [\phi^2_{t} (t_\eta+1) N(t_\eta) + \sum_{\ell=t_\eta+1}^t \phi^2_{t}(\ell)n(\ell)].
\end{equation}
The goal is to show that $\tilde{\beta}_\eta(t)  \rightarrow \beta_\eta$ as $t \rightarrow \infty$. In light of Lemma \ref{lemma:gamma}, 
\begin{align*}
\sum_{\ell=t_\eta+1}^t \phi_t^2(\ell) n(\ell) &= \sum_{\ell=t_\eta}^t \frac{m^2(\ell)}{N(\ell)} + 2\sum_{\ell=t_\eta+1}^{t-2} \frac{M(\ell)m(\ell+1)}{N(\ell)} \\
&+ 2M(t-1)\phi_t(t) - 2M(t_\eta-1) \phi_t(t_\eta) \\
&+ N(t) \phi_t^2(t) - N(t_\eta) \phi_t^2(t_\eta)
\end{align*}
Plugging it into $\tilde{\beta}_\eta(t)$ and we have
\begin{align*}
\tilde{\beta}_\eta(t) &= \frac{t - t_\eta}{M(t) - M(t_\eta)}  \cdot \frac{1}{t-t_\eta} \cdot \left\{ \phi_t^2(t_\eta+1) N(t_\eta) + \sum_{\ell=t_\eta}^t \frac{m^2(\ell)}{N(\ell)}  \right. \\
&+ 2\sum_{\ell=t_\eta+1}^{t-2} \frac{M(\ell)m(\ell+1)}{N(\ell)} + 2M(t-1)\phi_t(t) - 2M(t_\eta-1) \phi_t(t_\eta) \\
&+ N(t) \phi_t^2(t) - N(t_\eta) \phi_t^2(t_\eta) \left. \right\}
\end{align*}
We will compute the limit of each term. To begin with,
\begin{equation*}
\lim_{t \rightarrow \infty}  \frac{t - t_\eta}{M(t) - M(t_\eta)} = \frac{1}{\bar{m}}.
\end{equation*}
Next, since $\lim_{t\rightarrow \infty} \phi_t(t_\eta)$ exists by Lemma \ref{lemma:conv-gamma}, the first and last terms in the curly bracket cancel out in the limit:
\begin{equation*}
\lim_{t \rightarrow \infty} \frac{1}{t-t_\eta} \left[\phi_t^2(t_\eta+1) N(t_\eta) - N(t_\eta) \phi_t^2(t_\eta) \right] = 0.
\end{equation*}
Meanwhile, since $m(t)$ is bounded and $N(t) \rightarrow \infty$ as $t \rightarrow \infty$,
\begin{align*}
&\lim_{t \rightarrow \infty} \frac{1}{t-t_\eta} \sum_{\ell=t_\eta}^t \frac{m^2(\ell)}{N(\ell)}  = 0,  \\
&\lim_{t \rightarrow \infty} \frac{1}{t-t_\eta} 2M(t-1) \phi_t(t) = \frac{2\bar{m}}{1 - \eta} \lim_{t \rightarrow \infty} \frac{m(t)}{N(t)} = 0 \\
&\lim_{t \rightarrow \infty} \frac{1}{t-t_\eta} N(t) \phi_t^2(t) = \frac{\bar{n}}{1-\eta} \lim_{t \rightarrow \infty} \frac{m(t)}{N(t)} = 0.
\end{align*}
For the remaining terms, we further have
\begin{equation*}
\lim_{t \rightarrow \infty} \frac{1}{t-t_\eta} 2M(t_\eta-1) \phi_t(t_\eta) = \frac{\bar{m}^2}{\bar{n}} \frac{2\eta \ln\eta}{1 -\eta}.
\end{equation*}
Also, 
\begin{equation*}
\lim_{t \rightarrow \infty} \frac{1}{t-t_\eta} \cdot 2\sum_{\ell=t_\eta+1}^{t-2} \frac{M(\ell)m(\ell+1)}{N(\ell)} = \frac{2\bar{m}^2}{\bar{n}},
\end{equation*}
which can be seen as follows. Since $M(t) / N(t) \rightarrow \bar{m} / \bar{n}$ as $t \rightarrow \infty$, for any $\epsilon > 0$ there exists $T > 0$ such that $|M(t) / N(t) -  \bar{m} / \bar{n}| < \epsilon$ for all $t > T$. Hence, for all $t > T$,
\begin{align*}
\left(\frac{\bar{m}}{\bar{n}} - \epsilon \right) \frac{1}{t-t_\eta}  \sum_{\ell=t_\eta+1}^{t-2}  m(\ell+1) &\le  \frac{1}{t-t_\eta} \sum_{\ell=t_\eta+1}^{t-2} \frac{M(\ell)m(\ell+1)}{N(\ell)}  \\
&\le \left(\frac{\bar{m}}{\bar{n}} + \epsilon \right) \frac{1}{t-t_\eta}  \sum_{\ell=t_\eta+1}^{t-2}  m(\ell+1),
\end{align*}
where taking $t\rightarrow \infty$ and $\epsilon \rightarrow 0$ confirms the limit. Putting everything together,
\begin{equation*}
\lim_{t\rightarrow \infty}\tilde{\beta}_\eta(t) = \frac{1}{\bar{m}} \left(\frac{2\bar{m}^2}{\bar{n}} +\frac{\bar{m}^2}{\bar{n}} \frac{2\eta \ln\eta}{1-\eta}\right) = \frac{\bar{m}}{\bar{n}}\left(2 + \frac{2\eta \ln\eta}{1-\eta} \right) = \beta_\eta.
\end{equation*}
\hfill $\blacksquare$

\textbf{Proof of Theorem \ref{thm:joint normality}}
It suffices to show
$$
\sqrt{M(t) - M(t_\eta)} \Delta_{i,t} \Rightarrow \mathcal{N}(0, \tilde{\Sigma}_{i, \infty}), \quad \text{ as } t\rightarrow \infty,$$
where 
\begin{equation*} 
\tilde\Sigma_{i, \infty}(j,j^\prime) :=  \nabla \delta_{ij}(\theta^c)^\intercal \tilde{\Sigma}_D \nabla \delta_{ij^\prime} (\theta^c) + \textbf{Cov}\left( X_i(\theta^c)-X_j( \theta^c),X_i(\theta^c) - X_{j^\prime}(\theta^c)\right),
\end{equation*}
in which $\tilde{\Sigma}_D := diag(\beta_{i,\eta} \Sigma_{D,1}, \ldots, \beta_{S,\eta} \Sigma_{D,S})$ and $\beta$ is defined in (\ref{eq:beta}).
 Let 
$$ \Delta_{i} (\theta)= \left(\delta_{i1}(\theta),\cdots,\delta_{ii-1}(\theta),\delta_{ii+1}(\theta),\cdots,\delta_{iK}(\theta) \right),$$
$$\hat{\Delta}_{i,t} = \left(\hat{\delta}_{i1,t},\cdots,\hat{\delta}_{ii-1,t},\hat{\delta}_{ii+1,t},\cdots,\hat{\delta}_{iK,t} \right),$$
$$\bar{\Delta}_{i,t} =  \left(\bar{\delta}_{i1,t},\cdots,\bar{\delta}_{ii-1,t},\bar{\delta}_{ii+1,t},\cdots,\bar{\delta}_{iK,t} \right),$$
where 
$$  \bar{\delta}_{ij,t} :=  [M(t) - M(t_\eta)]^{-1} \sum_{\ell=t_\eta+1}^t m(\ell) [\mu_i(\hat{\theta}_\ell) - \mu_j(\hat{\theta}_\ell)].
$$
We have the decomposition,
\begin{align*}
&[M(t) - M(t_\eta)]^{1/2} [\hat{\Delta}_{i,t} - \Delta_i(\theta^c)] \\
 =& \underbrace{[M(t) - M(t_\eta)]^{1/2} (\hat{\Delta}_{i,t} - \bar{\Delta}_{i,t})}_{\textstyle \mathstrut Y_t} + \underbrace{[M(t) - M(t_\eta)]^{1/2}[\bar{\Delta}_{i,t} - \Delta_i(\theta^c)]}_{\textstyle \mathstrut Z_t} \\
\end{align*}
It suffices to show the respective convergence of $Z_t$ and $Y_t \mid \mathcal{F}_t$, where $F_t := \sigma(\hat{\theta}_1,\cdots,\hat{\theta}_t)$,  the  filtration  generated  by  input  parameter  estimates. We will show convergence of $Y_t+ Z_t$ through its characteristic function \begin{equation*}
\Phi_t(x) = \E[e^{\mathbf{i}x^T(Y_t + Z_t)}] = \E\left[e^{\mathbf{i}x^T Z_t} \E[e^{\mathbf{i}x^T Y_t} \mid \mathcal{F}_t] \right].
\end{equation*}
If we can show the respective convergence of $Z_t $ and $Y_t \mid \mathcal{F}_t$, then the result follows from the Dominated Convergence Theorem for complex-valued random variables.

\textbf{[(1)]} Convergence of $Y_t \mid \mathcal{F}_t$:
Expand $Y_t$ to get
\begin{align*}
    Y_t =& [M(t) - M(t_\eta)]^{1/2}  (\hat{\Delta}_{i,t} - \bar{\Delta}_{i,t}) \\
    =& [M(t) - M(t_\eta)]^{-1/2} \sum_{\ell=t_\eta+1}^t \sum_{r=1}^{m(t)}\left(
    \begin{tabular}{c}
          $X_{i,r}(\hat{\theta}_\ell) - \mu_i(\hat{\theta}_\ell)-(X_{1,r}(\hat{\theta}_\ell) - \mu_1(\hat{\theta}_\ell))$\\ 
         \vdots\\ 
         $X_{i,r}(\hat{\theta}_\ell) - \mu_i(\hat{\theta}_\ell)-(X_{i-1,r}(\hat{\theta}_\ell) - \mu_{i-1}(\hat{\theta}_\ell))$\\
         $X_{i,r}(\hat{\theta}_\ell) - \mu_i(\hat{\theta}_\ell)-(X_{i+1,r}(\hat{\theta}_\ell) - \mu_{i+1}(\hat{\theta}_\ell))$\\
         \vdots\\
         $X_{i,r}(\hat{\theta}_\ell) - \mu_i(\hat{\theta}_\ell)-(X_{K,r}(\hat{\theta}_\ell) - \mu_{K}(\hat{\theta}_\ell))$
    \end{tabular}
     \right) 
\end{align*}
where, conditioned on $\mathcal{F}_t$, $(X_{i,r}(\hat{\theta}_\ell) - \mu_i(\hat{\theta_\ell} -(X_{j,r}(\hat{\theta}_\ell) - \mu_j(\hat{\theta}_\ell)) _{j\neq i}$ are independent random vectors with mean $0$ and covariance matrix $\operatorname{\Sigma}_i(\hat{\theta_\ell})$. Since $\operatorname{\Sigma}_{X,i}(\hat{\theta_\ell}) \rightarrow \operatorname{\Sigma}_{X,i}({\theta^c})$ almost surely by the continuity of $\Sigma(\cdot)$. we can apply Lemma \ref{lemma:CLT} to get (conditioned on $\mathcal{F}_t$)

$$[M(t) - M(t_\eta)]^{-1/2} \sum_{\ell=t_\eta+1}^t \sum_{r=1}^{m(t)}\left(
    \begin{tabular}{c}
          $X_{i,r}(\hat{\theta}_\ell) - \mu_i(\hat{\theta}_\ell)-(X_{1,r}(\hat{\theta}_\ell) - \mu_1(\hat{\theta}_\ell))$\\ 
         \vdots\\ 
         $X_{i,r}(\hat{\theta}_\ell) - \mu_i(\hat{\theta}_\ell)-(X_{i-1,r}(\hat{\theta}_\ell) - \mu_{i-1}(\hat{\theta}_\ell))$\\
         $X_{i,r}(\hat{\theta}_\ell) - \mu_i(\hat{\theta}_\ell)-(X_{i+1,r}(\hat{\theta}_\ell) - \mu_{i+1}(\hat{\theta}_\ell))$\\
         \vdots\\
         $X_{i,r}(\hat{\theta}_\ell) - \mu_i(\hat{\theta}_\ell)-(X_{K,r}(\hat{\theta}_\ell) - \mu_{K}(\hat{\theta}_\ell))$
    \end{tabular}
     \right)    \Rightarrow \mathcal{N}(0,\Sigma_{X,i}(\theta^c)) \text{ as } t \rightarrow \infty$$

\textbf{[(2)]} Convergence of $Z_t$:\\
Since $\mu_j(\theta)$ is twice continuously differentiable for each $j$, so is $\Delta_i (\theta)$. By Taylor expansion,

\begin{align}
Z_t =& [M(t) - M(t_\eta)]^{-1/2} \sum_{\ell=t_\eta+1}^t m(\ell)[{\Delta_i}(\hat{\theta}_\ell) - \Delta_i(\theta^c)] \notag\\
=&[M(t) - M(t_\eta)]^{-1/2} \nabla (\Delta_i(\theta^c)) \sum_{\ell=t_\eta+1}^t m(\ell) (\hat{\theta}_\ell - \theta^c ) \label{first-term} \tag{$Z_{t,1}$} \\
&+ [M(t) - M(t_\eta)]^{-1/2} \sum_{\ell=t_\eta+1}^t m(\ell) ^\intercal \mathcal{O}  (||\hat{\theta}_\ell - \theta^c||_2 ) (\hat{\theta}_\ell - \theta^c ). \label{second-term} \tag{$Z_{t,2}$}
\end{align}

We show the convergence of $Z_{t,1}$ and $Z_{t,2}$, respectively. 
\textbf{[(2.1)]}
Convergence of $Z_{t,1}$

Our goal is to show that
\begin{equation*}
[M(t) - M(t_\eta)]^{-1/2}  \sum_{\ell=t_\eta+1}^t m(\ell) (\hat{\theta}_\ell - \theta^c) \Rightarrow \mathcal{N}(0, \tilde{\Sigma}), \quad \text{as } t \rightarrow \infty.
\end{equation*}
Since the input distributions are mutually independent, it suffices to show that for any $s$,
\begin{equation} \label{eq:q-conv}
[M(t) - M(t_\eta)]^{-1/2}  \sum_{\ell=t_\eta+1}^t m(\ell) (\hat{\theta}_{s,\ell} - \theta^c_s) \Rightarrow \mathcal{N}(0, \beta_{s, \eta} \Sigma_{D,s}), \quad \text{as } t \rightarrow \infty.
\end{equation}
Note that $\{\hat{\theta}_{s,\ell}\}_{\ell=1}^t$ are linear combinations of $\{D_{s,j}\}_{j=1}^{N_s(t)}$, so we can rearrange the terms to obtain
\begin{align*}
 \sum_{\ell=t_\eta+1}^t m(\ell) (\hat{\theta}_{s,\ell} - \theta^c_s) &= 
 \phi_{i,s,t}(t_\eta+1) \sum_{j=1}^{N_s(t_\eta)} D_{s,j} +\sum_{\ell=t_\eta+1}^t \phi_{i,s,t}(\ell) \sum_{j=N_s(\ell-1)+1}^{N_s(\ell)} D_{s,j},
\end{align*}
where $\phi$ is defined in (\ref{eq:phi}). Since $\{D_{s,j}\}$ are i.i.d. and \ref{conv-cond} holds by assumption, (\ref{eq:q-conv}) can be verified by invoking Lemma \ref{lemma:Lindberg} (details omitted).

Hence, 
$$Z_{t,1} \Rightarrow \mathcal{N}\left(0, \nabla (\Delta_{i,t}(\theta^c))^\intercal\beta_{s, \eta} \Sigma_{D,s}\nabla (\Delta_{i,t}(\theta^c))\right)  \text{ as } t \rightarrow \infty.$$
\textbf{[(2.2)]}
Convergence of $Z_{t,2}$

It is left to show that the residual term $Z_{t,2}$ vanishes in probability. Since a.s.,
\begin{equation*}
 \|\hat{\theta}_\ell - \theta^c \|_2^2 = \sum_{s=1}^S \|\hat{\theta}_{s,\ell} - \theta^c_s \|_2^2 \le M_2 \sum_{s=1}^S \frac{\log\log N_s(\ell)}{N_s(\ell)} \le M_2 S\log (U\ell)/\ell,
\end{equation*}
for some $M_2>0, U\ge \sup_\ell \{\sup_s n_s(\ell),m(\ell)\}$ by the Law of the Iterated Logarithm. Then we have a.s. $\exists M_1 >0$,
\begin{align*}
||Z_{t,2}||_2 & \le [M(t) - M(t_\eta)]^{-1/2}  \sum_{\ell=t_\eta+1}^t m(\ell) M_1 ||\hat{\theta}_\ell - \theta^c||_2^2\\
&\le [M(t) - M(t_\eta)]^{-1/2}  \sum_{\ell=t_\eta+1}^t m(\ell) M_1 M_2 S \log(U\ell)/ \ell \\
&\le (t-t_\eta)^{-1/2} UM_1 M_2 S \log(Ut) \sum_{\ell=t_\eta+1}^t 1/\ell \\
&\le (t-t_\eta)^{-1/2} UM_1 M_2 S \log(Ut) \log(t),
\end{align*}
which converges to 0 as $t\rightarrow \infty$. Hence we get
$$Z_{t} \Rightarrow \mathcal{N}\left(0, \nabla (\Delta_{i,t}(\theta^c))^\intercal\beta_{s, \eta} \Sigma_{D,s}\nabla (\Delta_{i,t}(\theta^c))\right)  \text{ as } t \rightarrow \infty.$$
Together with the convergence of $Y_t | \mathcal{F}_t$, we complete the proof.

\vspace{10pt}
\textbf{Proof of Proposition \ref{prop:opt-eta}}
The proof is developed in three steps.

{\bf 1. Strict convexity of $\psi_\kappa$.}  The first-order and second-order derivatives of $\psi_\kappa$ are given by
\begin{align} 
\psi'_\kappa(\eta) &= \frac{(1+\kappa)(1-\eta) + (1+\eta)\ln \eta}{(1-\eta)^3}, \label{eq:psi-derivative} \\
\psi''_\kappa(\eta) &=  \frac{2(1+\kappa)(1-\eta) + (4+2\eta) \ln\eta - \eta+\eta^{-1}}{(1-\eta)^4}. \notag
\end{align}
We claim that $\psi''_\kappa(\eta) > 0$ on $(0,1)$. To see this, denote the numerator of $\psi''_\kappa(\eta)$ as
\begin{equation*}
g_\kappa(\eta) := 2(1+\kappa)(1-\eta) + (4+2\eta) \ln\eta - \eta+\eta^{-1}.
\end{equation*}
It can be verified that
\begin{align*}
g'_\kappa(\eta) &= -(1+2\kappa) + 4\eta^{-1} - \eta^{-2} + 2\ln\eta, \quad g''_\kappa(\eta) = 2\eta^{-3} (\eta-1)^2.
\end{align*}
We have $g'_\kappa(1) = 2(1-\kappa) < 0$ and $g''_\kappa(\eta) > 0$ for $\eta \in (0,1)$, which implies that $g'_\kappa(\eta) < 0$ for $\eta \in (0,1)$. Furthermore, since $g_\kappa(1) =0$, we know that $g_\kappa(\eta) > 0$ for $\eta \in (0,1)$ and therefore $\psi_\kappa$ is strictly convex in $(0,1)$. Furthermore, since $\lim_{\eta \rightarrow 0} \psi'_\kappa(\eta) = -\infty$ and $\lim_{\eta \rightarrow 1} \psi'_\kappa(\eta) = 1+\kappa$, $\psi_\kappa$ must always have a unique minimizer in $(0,1)$, which we shall denote by $\eta^*(\kappa)$.

{\bf 2. Limiting behavior of $\eta^*(\kappa)$ as $\kappa$ approaches 0 or 1. } To begin with, according to (\ref{eq:psi-derivative}), $\eta^*(\kappa)$ is the solution to the following equation.
\begin{equation*}
\varphi_\kappa(\eta) := (1+\kappa)(1-\eta) + (\eta+1)\ln \eta = 0.
\end{equation*}
\begin{enumerate}
\item[(i)]
$\eta^*(\kappa) \le \kappa^{-1}.$

Note that $\varphi_\kappa$ is concave since $\varphi''_\kappa(\eta) = \eta^{-1} - \eta^{-2} < 0$ for $\eta \in (0,1)$. Together with the fact that $\lim_{\eta \rightarrow 0} \varphi_\kappa(\eta) = -\infty$ and $\lim_{\eta \rightarrow 1} \varphi_\kappa(\eta) = 1+\kappa$, $\varphi_\kappa$ can cross 0 only once. Thus, for any $\eta' \in (0,1)$ such that $\varphi_\kappa(\eta') \ge 0$, we must have $\eta^*(\kappa) \le \eta'$. Finally, 
\begin{align*}
\varphi_\kappa(\kappa^{-1}) &= \kappa - \kappa^{-1} - (\kappa^{-1} + 1) \ln \kappa\\
& \ge \kappa - \kappa^{-1} - (\kappa^{-1} + 1)  (\kappa-1) = 0,
\end{align*}
where the last inequality follows from the fact that $\ln \eta \le \eta - 1$ for all $\eta > 0$. This implies that $\eta^*(\kappa) \le \kappa^{-1}$.

\item[(ii)]
$\lim_{\kappa \rightarrow 1} \eta^*(\kappa) = 1$.

By a similar argument, for any $\epsilon \in (0,1)$ such that $\varphi_\kappa(1-\epsilon) \le 0$, we would have $\eta^*(\kappa) \ge 1 - \epsilon$. Therefore, it suffices to show that 
\begin{equation} \label{eq:phi-limit}
\liminf_{\kappa \rightarrow 1} \varphi_\kappa(1-\epsilon) \le 0, \quad \forall \epsilon \in (0,1).
\end{equation}
as this would imply that $\liminf_{\kappa\rightarrow 1} \eta^*(\kappa) \ge 1- \epsilon$ for any fixed $\epsilon \in (0,1)$. Note that for any fixed $\epsilon$,
\begin{equation*}
\liminf_{\kappa \rightarrow 1} \varphi_\kappa(1-\epsilon) = 2\epsilon + (2-\epsilon) \ln(1-\epsilon):= h(\epsilon),
\end{equation*}
and note that $h(0) = 0$ and $h(\epsilon) \rightarrow -\infty$ as $\epsilon \rightarrow 1$. Furthermore,
\begin{align*}
h'(\epsilon) &= 2 - \ln(1-\epsilon) - \frac{2-\epsilon}{1-\epsilon} \\
&\le 2 - \left(1 - \frac{1}{1-\epsilon}\right) - \frac{2-\epsilon}{1-\epsilon}  = 0,
\end{align*}
where the last inequality follows from the fact that $\ln x \ge 1 - x^{-1}$ for all $x > 0$. We conclude that $h(\epsilon) \le 0$ for any $\epsilon \in (0,1)$.
\end{enumerate}

{\bf Step 3. Maximum variance reduction rate.} We first show that
\begin{equation} \label{eq:inf-ratio-le}
\inf_{\kappa>1} \frac{\psi_\kappa(\eta^*(\kappa))}{\psi_\kappa(0)} \le \frac{1}{2}.
\end{equation}
Note that $\psi_\kappa(0) = \kappa$ for any $\kappa > 1$. Consider $\psi_\kappa(\kappa^{-k}), k \in \Z^+$, for which we have
\begin{equation*}
\psi_\kappa(\kappa^{-k})= \frac{\kappa^k [\kappa(\kappa^k-1) - k\ln \kappa]}{(\kappa^k-1)^2},
\end{equation*}
where sending $\kappa$ to 1 and applying L'Hospital's Rule gives
\begin{align*}
\inf_{\kappa>1} \frac{\psi_\kappa(\eta^*(\kappa))}{\psi_\kappa(0)}  \le \lim_{\kappa \rightarrow 1} \frac{\psi_\kappa(\kappa^{-k})}{\psi_\kappa(0)} = \frac{1}{2} + \frac{1}{k}.
\end{align*}
This implies (\ref{eq:inf-ratio-le}) as $k$ can be arbitrarily large. To prove the opposite direction of (\ref{eq:inf-ratio-le}), note that for any fixed $\eta \in (0,1)$, $\psi_\kappa(\eta)/\psi_\kappa(0)$ is a decreasing function in $\kappa$. Therefore, 
\begin{equation*}
\inf_{\kappa>1} \frac{\psi_\kappa(\eta^*(\kappa))}{\psi_\kappa(0)} \ge 
\inf_{\kappa>1} \frac{\psi_1(\eta^*(\kappa))}{\psi_1(0)} \ge
 \inf_{\eta \in (0,1)} \psi_1(\eta) = \inf_{\eta \in (0,1)} \left\{\frac{1}{1-\eta} + \frac{\eta \ln\eta}{(1-\eta)^2} \right\} = \frac{1}{2},
\end{equation*}
which completes the proof of (\ref{eq:inf-ratio}). 
\hfill $\blacksquare$

\end{document}